\numberwithin{equation}{section}
\theoremstyle{plain}
\newtheorem{theorem}{Theorem}[section]
\newtheorem{remark}{Remark}[section]
\def\@bysame#1{\vrule height 1.5pt depth -1pt width 3em \hskip
0.5em\relax}
\newcommand{\N}{ \mathbb{N} }
\newcommand{\R}{ \mathbb{R} }
\newcommand{\trunc}[1]{ {\lfloor #1 \rfloor} }
\newcommand{\wh}[1]{ \widehat{ #1 } }
\newcommand{\wt}[1]{ \widetilde{ #1 } }
\newcommand{\eins}{{\bm 1}}
\newcommand{\vecM}{{\bm M}}
\newcommand{\vecw}{{\bm w}}
\newcommand{\vecX}{{\bm X}}
\newcommand{\bfxi}{\bm \xi}
\newcommand{\bfmu}{\bm \mu}
\newcommand{\bfSigma}{\bm\Sigma}
\newcommand{\Var}{{\mbox{Var\,}}}
\begin{document}

\begin{center}
	\begin{minipage}{.8\textwidth}
		\centering 
		\LARGE High-Confident Nonparametric Fixed-Width Uncertainty Intervals and Applications to Projected High-Dimensional Data and Common Mean Estimation\\[0.5cm]
		
		\normalsize
		\textsc{Yuan-Tsung Chang 
			and Ansgar Steland}\\[0.1cm]

		Department of Social Information\\ 
Mejiro University \\ 
4-31-1 Nakaochiai, Shinjuku--ku \\ 
Tokyo 161--8539, Japan \\ {\em and} \\

		Insitute of Statistics \\
		RWTH Aachen University\\
		Aachen, Germany\\
		Email: \verb+steland@stochastik.rwth-aachen.de+
		
	\end{minipage}
\end{center}

\begin{abstract}
Nonparametric two-stage procedures to construct fixed-width confidence intervals are studied to quantify uncertainty. It is shown that the validity of the random central limit theorem (RCLT) accompanied by a consistent and asymptotically unbiased estimator of the asymptotic variance already guarantees consistency and first as well as second order efficiency of the two-stage procedures. This holds under the common asymptotics where the length of the confidence interval tends to $0$ as well as under the novel proposed high-confident asymptotics where the confidence level tends to $1$. The approach is motivated by and applicable to data analysis from distributed big data with non-negligible costs of data queries. The following problems are discussed: Fixed-width intervals for a the mean, for a projection when observing high-dimensional data and for the common mean when using nonlinear common mean estimators under order constraints. The procedures are investigated by simulations and illustrated by a real data analysis.
\end{abstract}

\textit{Keywords:} Big data, data science, high-dimensional data, jackknife, sequential analysis, sequential sampling 

\section{Introduction}

In this paper, we study fully nonparametric two-stage procedures to construct a fixed-width  interval for a parameter to quantify uncertainty. Both the common high-accuracy framework, where the asymptotics assumes that the width of the interval shrinks, and a novel high-confident framework is studied. Under {\em high-confident} asymptotics the required uncertainty in terms of the width of the interval is fixed and the asymptotics assumes that the confidence level increases. General sufficient conditions are derived which yield consistency and efficiency for both frameworks. We study three statistical problems: Nonparametric fixed-width intervals for the mean of univariate data which may be the most common setting, for the mean projection of high-dimensional data to illustrate the application to big data, and for the common mean of two samples as a classic statistical problem leading to a {\em nonlinear} estimator, which has not yet treated  in the literature from a two-stage sampling perspective. 
The focus is on two-stage procedures, as they provide a good compromise between the conflicting goals of a minimal sample size, which requires purely sequential sampling, and feasibility in applications in terms of required computing ressources and logistic simplicity, which is better matched by one- or two-stage sampling procedures. 

Two-stage sequential sampling is a well established approach motivated by the aim to make statistical statements with minimal samples sizes without relying on purely sequential sampling. Instead, the data is sampled in two batches, a first-stage sample and a second-stage sample if required. At the second stage the final sample size is determined using the information contained in the first-stage (pilot) sample. The development of such procedures was mainly motivated by the need to base statistical inference on small samples in a world where large samples are not available. But this technique is also of interest in various areas including emerging ones such as data science and big data, where massive amounts of variables are collected and need to be processed and analyzed: When analyzing big data distributed over many nodes of a network each single query may be associated with a high response time and substantial data transmission costs ruling out a purely sequential sampling strategy, since the benefit of less required observations on average is overcompensated by the high costs for each query. Contrary, the two-stage methods proposed in this paper allow to estimate efficiently means of the variables and their projections with preassigned accuracy and confidence. The general construction of the sample size rules mainly follows the established approach. But, compared to the existing literature, we use a slightly modified first-stage sample size rule which takes into account prior knowledge and historical estimates, respectively, of the data uncertainty. Our studies indicate that even if we use only three data points to get a rough guess of variability, the resulting first-stage sample sizes comes much closer to the actually required sample size, thus avoiding oversampling at this stage. In the context of distributed data, the proposed methods with this {\em three-observations-rule} need at most three database queries. 

This paper contributes to the existing literature on two-stage procedures, see \cite{Stein1945}, \cite{Muk1980}, \cite{GoshMukhoSen1997}, \cite{MukSilvaBook2009} and the references given therein, by proposing a concrete nonparametric procedure with the following properties: For {\em any} estimator of the mean (or a parameter $ \theta $), which satisfies the random central limit theorem and whose asymptotic variance can be estimated by a consistent and asymptotically unbiased estimator, the random sample size leading to the proposed fixed-width confidence interval is consistent and asymptotically unbiased for the optimal sample size. Further, the procedure yields the right asymptotic coverage probability and exhibits first order as well as second order efficiency. 

Further, and more importantly, we go beyond the classic framework which establishes the above properties when the width of the confidence interval tends to $0$. We argue that this is to some extent counter-intuitive in view of the posed problem to construct a {\em fixed-width} confidence interval. It also limits the approximate validity of the results to cases where one aims at high-accuracy estimation. But in many applications it is more appropriate to fix the width of the confidence interval and to require that a larger number of observations is due to a higher confidence level. Therefore, we propose a novel framework and study the construction of a fixed-width interval under {\em high-confidence} asymptotics (equivalently: {\em low-error-probability} asymptotics). This is also motivated by the fact that in many areas such as high-quality high-throughput production engineering or statistical genetics and brain research, where high-dimensional data is collected, large confidence levels and small significance levels, respectively, are in order and used in practice. For example, in production engineering the accuracy is fixed by the technical specifications and not by the statistician, and in genetics as well as in brain research small error probabilities are required to reach scientific relevance and to take multiple testing into account. 

It is shown that the proposed two-stage procedure is valid under high-confident asymptotics and exhibits first and second order efficiency properties, as long as the parameter estimator satisfies the random central limit theorem and a consistent and asymptotically unbiased estimator of the asymptotic variance is at our disposal. 

Having in mind big data sets with a large number of variables, we then apply the general results to projections of  high-dimensional data. It is  assumed that the observations are given by a data stream of (possibly) increasing dimension, which is sampled in batches by our two-stage procedure. Two-stage procedures for high-dimensional data has been studied in-depth in \cite{AoshimaYata2011} assuming that the dimension, $p$, tends to $\infty $ and the sample size is either fixed or tends to $ \infty$ as well. Here we consider a projection of high-dimensional data, where, when having sampled $n$ observations, the projection may depend on the sample size $n$.  The asymptotic properties (consistency and efficiency) of the fixed-width confidence interval for the mean projection hold for high-accuracy asymptotics  as well as high-confident asymptotics. The dimension $p$ may be increasing with $n$ in an unconstrained way. 

As an interesting and non-trivial classical application, we consider the problem of common mean estimation. Here one aims at estimating the mean from two samples assuming that they have the same mean but possibly different or ordered variances. Many of the estimators proposed and studied in the literature are given by a convex combination of the sample means with convex weights depending on the sample means and the sample variances. 

The paper is organized as follows. Section~\ref{SectionFWI} studies nonparametric two-stage fixed-width confidence intervals for the mean under both asymptotic frameworks, starting with the usual high-accuracy approach and then discussing the novel  high-confident asymptotics. Section~\ref{Sec: HDData} provides the results when dealing with a projection of high-dimensional data. Common mean estimation is treated in Section~\ref{Sec: CommonMean}. Results from simulations and a data example are provided in Section~\ref{Sec: Simulations}.
 
\section{Nonparametric two-stage fixed-width confidence interval}
\label{SectionFWI}

Let $ Y_1, Y_2, \dots $ be i.i.d.($F$) observations with mean $ \mu $ and finite variance $ \sigma^2 \in (0, \infty) $. Further, let $ \wh{\mu}_n $ be an estimator for $ \mu $ using the first $n$ observations $ Y_1, \dots, Y_n $. We focus on the mean as the parameter of interest, but it is easy to see that all results remain true for any univariate parameter $ \theta = \theta(F) $ and an estimator $ \wh{\theta}_n $. 

The classical approach to the construction of a confidence interval is based on a sample of fixed (but large) sample size $N$ and determines a random interval $ [U_N, V_N] $ depending on the sample(s), such that its coverage probabiliy equals the given confidence level $ 1-\alpha \in (0,1) $ for each $N \ge 1 $, or has asymptotic coverage $ 1-\alpha $, as $ N $ tends to $ \infty $. As a consequence, the length $L = V_N - U_N $ of the interval, which represents the reported accuracy, is random.

There are, however, situations where we want to report an interval of a fixed, preassigned accuracy $d$, symmetric around the point estimator of $ \mu $, so that $ L = 2d $. Then the coverage probability of the resulting interval $ [\wh{\mu}_N - d, \wh{\mu}_N + d ] $ depends on the distribution of $ \wh{\mu}_N $ and the sample size $N$ becomes the parameter we may select to achieve a certain confidence level.  In mathematical terms, we wish to find some $N$, so that the interval around the  estimator $ \wh{\mu}_N $ based on two samples of size $N$ has coverage probability
\begin{equation}
	\label{AsCoverage}
	P \left( [ \wh{\mu}_N - d, \wh{\mu}_N  + d ] \ni \mu \right) = 1-\alpha + o(1),
\end{equation}
as the precision parameter $d$ tends to $0$. The $ o(1) $ term is required as the CLT resp. RCLT for $ \wh{\mu}_N $ is used to construct a solution. 

Usually, $ d $ is small and $N$ increases  when $d$ decreases. Thus, it is reasonable to consider asymptotic properties as $d$ tends to $0$. We shall, however, also consider the case of a fixed accuracy $d$, not necessarily 'small', and study asymptotic properties when the confidence level tends to $1$.

Suppose the estimator $ \wh{\mu}_n $ satisfies the central limit theorem, i.e.
\begin{equation}
	\label{AssCLT}
	\sqrt{n}( \wh{\mu}_n - \mu) \stackrel{d}{\to} N(0, \sigma_\mu^2), 
\end{equation}
as $ n \to \infty $, for some positive constant $ \sigma_\mu^2 $, the asymptotic variance of our estimator for $ \mu $. Throughout the paper we shall assume that we have an estimator for $ \sigma_\mu^2 $ at our disposal, which we denote by $  \wh{\sigma}_{n}^2 = \wh{\sigma}_{\mu,n}^2 $ if it is based on the first $n$ observations. The most common choice for $ \wh{\mu}_n $ is, of course, the sample average $ \overline{Y}_n = \frac{1}{n} \sum_{i=1}^n Y_i $, which satisfies (\ref{AssCLT}) with $ \sigma_\mu^2 = \sigma^2 $. The canonical estimator for $ \sigma_\mu^2 $ is $ S_n^2 = \frac{1}{n-1} \sum_{i=1}^n (Y_i - \overline{Y}_n)^2 $. When considering a parameter $ \theta $ estimated by $ \wh{\theta}_n $ such that the analog of (\ref{AssCLT}) holds, i.e., $ \sqrt{n}( \wh{\theta}_n - \theta ) \stackrel{d}{\to} N(0, \sigma_\theta^2) $, one formally replaces $ \sigma_\mu^2 $ by $ \sigma_\theta^2 $ and needs an estimator $ \wh{\sigma}_{\theta,n}^2 $ having the properties required in Assumption (E) in the next section. For simplicity of presentation and proofs, we stick to the case of the mean, however.

 Invoking the CLT for $ \wh{\mu}_N $, it is easy to see that the problem is solved by the {\em asymptotically optimal} sample size $ N_{opt} = \lceil N_{opt}^* \rceil $, where
\begin{equation}
	\label{Formula_N_opt_star}
	N_{opt}^* = \frac{ \sigma_{\mu}^2 \Phi^{-1}(1- \alpha/2)^2 }{d^2},
\end{equation}
as the left hand side of (\ref{AsCoverage}) is equal to $ P( | \sqrt{n}( \wh{\mu}_N - \mu ) | \le d ) = 2\Phi( \sqrt{N} d/\sigma_\mu ) - 1 + o(1) $. Observe that $ N_{opt}^* \to \infty $, if $ d \downarrow 0 $, which in turn justifies the application of the CLT. 

If $ \sigma_\mu^2 $ were known, then $ \lceil N_{opt}^* \rceil $ would solve the posed problem. The proposed two-stage procedures draws a random sample of size $ N_0 $ at the first stage, which is larger or equal to a given minimal sample size $ \overline{N}_0 $. The first stage sample size $ N_0 $ will be larger if the required precision gets smaller. The first-stage sample is used to estimate the uncertainty of the estimator, and that (random) estimate is then used to specifiy the final sample size $ \wh{N}_{opt}^* $ used at the second stage. Before discussing how one should specify $ N_0 $ and $ \wh{N}_{opt}^* $, let us summarize the basic algorithm:

\noindent
\textbf{Preparations:} Specify the minimal sample size $ \overline{N}_0 $, the confidence level $ 1-\alpha $ and the precision $d$.

\noindent
\textbf{Stage I:} Draw an initial sample of size $ N_0 \ge \overline{N}_0 $, in order to estimate unknowns (in our case $ \sigma_\mu^2 $) based on that data, yielding an estimator $ \wh{N}_{opt}^* $  for $ N_{opt}^* $, i.e. a random sample size.

\noindent
\textbf{Stage II:} Draw additional $ \wh{N}_{opt}^* - N_0 $ observations to obtain a sample of size $ \wh{N}_{opt}^* $. Estimate $ \mu $ by $ \wh{\mu}_{\wh{N}_{opt}^*} $ and 

\noindent
\textbf{Solution:} output the fixed-width confidence interval $ \left[\wh{\mu}_{\wh{N}_{opt}^*}  - d, \wh{\mu}_{\wh{N}_{opt}^*} + d \right] $.

\subsection{Fixed-width interval under high-accuracy asymptotics}

Let us first study the classical approach to fix the confidence level $ 1- \alpha $ and to assume that the accuracy is small suggesting to investigate approximations for $ d \downarrow 0$. This framework can be called {\em high-accuracy asymptotics}.

In the sequel, we review part of the literature which focused on normal data and the associated optimal estimators. We follow the arguments developed for the Gaussian case to motivate our fully nonparametric proposal where $ \wh{\mu}_n $ may be an arbitrary estimator satisfying the required regularity assumptions stated below in detail.

The original Stein procedure, see  \cite{Stein1945},  addresses Gaussian i.i.d. observations and estimates $ \mu $ by the sample mean, such that $ \sigma_\mu^2 = \sigma^2 $ and a natural estimator for $ \sigma_\mu^2 $ based on $ Y_1, \dots, Y_n $ is $ S_n^2 $. Stein uses the rule
\[
N = \max\left\{ \overline{N}_0, \biggl\lfloor  \frac{t(\overline{N}_0-1)_{1-\alpha/2}^2 S_{\overline{N}_0}^2 }{ d^2 } \biggr\rfloor+ 1 \right\}.
\]
As  $ \overline{N}_0 $ is fixed, the procedure turns out to be inconsistent. To overcome this issue, \cite{ChowRobbins1965} proposed a purely sequential rule, namely
\[
N = \inf\{ n \ge \overline{N}_0 : n \ge t(n-1)_{1 - \alpha/2}^2 S_n^2 / d^2 \}.
\]
\cite{Muk1980} noted that one gets for small $d$ the lower bound $ N \ge  \Phi^{-1}(1-\alpha/2) $ and 
proposed to increase the variance estimate slightly by $ 1/n $. Indeed, for small enough $d $ we have the lower bound $ N \ge t(\overline{N}_0-1)_{1-\alpha/2} / d $, if one replaces the estimate $  S_{n_0}^2 $ by $ S_{n_0}^2 + 1/n $, since for $ d \le 1 $ 
\begin{align*}
	\frac{ t(n-1)_{1 - \alpha/2}^2 (S_n^2 + 1/n)}{d^2} 
	& = \frac{ t(n-1)_{1 - \alpha/2}^2 S_n^2 }{d^2} + \frac{ t(n-1)_{1 - \alpha/2}^2 }{ nd^2}  \\
	& \ge \frac{ t(n-1)_{1 - \alpha/2}^2 } { nd^2 },
\end{align*} 
such that any $ n \ge \overline{N}_0 $ with $ n \ge  t(n-1)_{1 - \alpha/2}^2 (S_n^2 + 1/n) / d^2 $ satisfies $ n \ge t(n-1)^2_{1-\alpha/2} / (nd^2) $ and hence $ n^2  \ge n \ge t(n-1)^2_{1-\alpha/2} / d^2 $. This leads to $ n \ge t(N_0-1)_{1-\alpha/2} / d $ for any such $n$, such that we obtain the lower bound $ N \ge t(\overline{N}_0 - 1)_{1-\alpha/2} / d $. 
Therefore, the purely sequential rule
\[
	N' = \inf\{ n \ge \overline{N}_0 : n \ge t(n-1)_{1 - \alpha/2}^2 (S_n^2+1/n) / d^2 \}
\]
satisfies $ N' \ge \max\{ \overline{N}_0,  t(\overline{N}_0-1)_{1-\alpha/2} / d \} $. 

The idea is to now to use this lower bound
$
	\max\{ \overline{N}_0, \lfloor t(\overline{N}_0-1)_{1-\alpha/2} / d \rfloor + 1 \},
$
 as the first-stage sample size for Gaussian data; for non-normal samples one replaces $ t(\overline{N}_0-1)_{1-\alpha/2} $ by the corresponding quantile, $ \Phi^{-1}(1-\alpha/2) $, of the standard normal distribution. However, this rule does not take into account the scale of data and can lead to unrealistically large sample sizes, see the data example in Section~\ref{Sec: Simulations}. \cite{Muk1980} has proposed the modified rule $
 \max\{ \overline{N}_0, \lfloor ( t(\overline{N}_0-1)_{1-\alpha/2} / d )^{2/(1+\gamma)} \rfloor +1 \}, $ for some $ 0 < \gamma < \infty $. $ \gamma $ can be selected to obtain a reasonable first-stage sample size, see the discussion and example in \cite[p.~115]{MukSilvaBook2009}. 
 
 But, indeed, Mukhopadhyay's argument also applies when using $ S_{n_0}^2 + f^2/n_0 $, for some $ f > 0 $, and then one gets the lower bound $ \max\{ \overline{N}_0, t(\overline{N}_0-1)_{1-\alpha/2} f / d \} $ and $ \max\{ \overline{N}_0, \Phi^{-1}(1-\alpha/2) f / d \} $, respectively. It is easy to check that all above arguments go through as well, if we replace $ S_n^2 $ by any guess or pilot estimate using a (very) small sample. 
 
\textbf{Three-Observation-Rule:} Since frequently in applications it is possible to sample at least three observations, we propose to choose $ f $ as an estimate $ \wh{\sigma}_{\mu,3} $ of $ \sigma_\mu$, using three addtional observations, on which we condition in what follows. This leads to our proposal for the first-stage sample size, namely
\begin{equation}
	\label{DefN0}
	N_0 = \max \left\{ \overline{N}_0, \left \lfloor \frac{\Phi^{-1}(1-\alpha/2) \wh{\sigma}_{\mu,3}  }{d} \right \rfloor + 1 \right\}.
\end{equation}
Note that $ N_0 $ depends on the preassigned precision $d$ and satisfies $ N_0 \to \infty $, as $ d \downarrow 0 $. 

It is natural to estimate $ N_{opt}^* $ by $ \wh{N}_{opt}^* = \frac{\wh{\sigma}_{N_0}^2 \Phi^{-1}(1-\alpha/2)^2 }{ d^2 } $, and this leads to the final sample size of the procedure,
\begin{equation}
	\label{DefN1}
	\wh{N}_{opt} = \max \left\{  N_0, \biggl\lfloor \frac{ \wh{\sigma}_{N_0}^2 \Phi^{-1}(1-\alpha/2)^2 }{d^2} \biggr\rfloor + 1 \right\},
\end{equation}
which is a random variable (depending on the first-stage data), as $ \wh{\sigma}_{N_0}^2 $ estimates the asymptotic variance of the estimator $ \wh{\mu}_{N_0} $ using the first-stage sample of size $N_0$. Observe that we continue to add a $ * $ in notation to indicate quantities which may not be integer-valued.

Let us briefly review the following facts and considerations leading to the notions  of consistency and unbiasedness: Note that $ N_{opt}^* \to \infty $ and $ \wh{N}_{opt}^* \to \infty $, in probability, for any (arbitrary) weakly consistent estimator $ \wh{\sigma}_\mu^2 $ of the asymptotic variance $ \sigma_{N_0}^2 $ (based on the first-stage sample of size $ N_0 $), which slighly complicates their comparison. If we only know that $ | \wh{\sigma}_\mu^2 - \sigma_\mu^2 | = o_P(1) $, which follows from ratio consistency $ | \wh{\sigma}_\mu^2 / \sigma_\mu^2 - 1 |  = o_P(1) $, then the difference $  \wh{N}_{opt}^* - N_{opt}^*  $ is not guaranteed to be bounded, as 
\[
| \wh{N}_{opt}^* - N_{opt}^* | = | \wh{\sigma}_\mu^2 - \sigma_\mu^2 | \frac{ \Phi^{-1}( 1 - \alpha/2 )^2 }{ d^2 },
\]
where the first factor is $ o_P(1) $, but the second one diverges, as $ d \downarrow 0 $. For this reason, $ \wh{N}_{opt}^* $  is called {\em consistent} for the asymptotically optimal sample size $ N_{opt}^* $, if the ratio approaches $1$ in probability, i.e. if 
\[
\frac{ \wh{N}_{opt}^* }{ N_{opt}^*  } = 1 + o_P(1),
\]
as $ d \downarrow 0 $. Having in mind that the second-stage (final) sample size $ \wh{N}_{opt}^* $ is random whereas the unknown optimal value, $ N_{opt}^* $, is non-random, the question arises whether $\wh{N}_{opt}^* $ is, at least, close ot $ N_{opt}^* $ on average. Therefore, to address this question and going beyond consistency, $ \wh{N}_{opt}^* $ is called {\em asymptotically first order efficient in the sense of Chow and Roberts}, if
\[
\frac{ E( \wh{N}_{opt}^* ) }{ N_{opt}^* } = 1 + o(1),
\] 
as $ d \downarrow 0 $. 

Observe that the last property allows for the case that the difference $ \wh{N}_{opt}^* - N_{opt}^* $ tends to $ \infty $ even in the mean, as $ d $ tends to $0$. This typically indicates that the procedure is substantially oversampling the optimal sample size. A procedure for which the estimated optimal same size remains on average in a bounded vicinity of the optimal truth is, of course, preferable.  $ \wh{N}_{opt}^* $ is called {\em second order asymptotically efficient}, if
\[
  E(  \wh{N}_{opt}^* - N_{opt}^* ) = O(1),
\]
as $ d \downarrow 0 $. 

The regularity assumptions we need to impose are as follows:

\textbf{Assumption (E):} The estimator $ \wh{\sigma}_{N_0}^2 $ is consistent and asymptotically unbiased for $ \sigma_{\mu}^2 $, i.e. 
\[
\frac{\wh{\sigma}_{N_0}^2 }{ \sigma_{\mu}^2 } = 1 + o_P(1), \qquad \frac{ E( \wh{\sigma}_{N_0}^2  )}{ \sigma_\mu^2}  = o(1),
\]
as $ d \downarrow 0 $.

This assumption is not restrictive and satisfied by many estimators. For example, the jackknife variance estimator studied in \cite{Shao1993}, \cite{ShaoWu1998} and \cite{StelandChang2019} provides an example satisfying Assumption (E). 

Further, we require the following strengthening of the central limit theorem to hold.

\textbf{Assumption (A):}  $ \wh{\mu}_{n} $ satisfies the random central limit theorem, i.e. for any family $ \{ N_a : a > 0 \} $ of stopping times for which $ N_a / a \stackrel{P}{\to} k $, $ 0 < k < \infty $, it holds
\[
  \sqrt{N_a} ( \wh{\mu}_{N_a} - \mu )/ \sqrt{\sigma_\mu^2 a k } \stackrel{d}{\to} N(0, 1),
\]
as $ a \to \infty $.

The validity of the random central limit theorem is required, as we have to employ a normal approximation with the first-stage sample size, which is random by construction. Clearly, however, for i.i.d. observations following an arbitrary distribution with finite second moment and $ \wh{\mu}_n $ the arithmetic mean Assumption (A) is well known, see, e.g.,  \cite[Th.~2.7.2]{GoshMukhoSen1997}. 

The following theorem summarizes the main asymptotic first order properties of the proposed two-stage approach to construct a fixed width confidence interval. 

\begin{theorem} Suppose that  Assumption (E) is satisfied. Then the following two assertions hold true.
	\label{Th_FW_CI}	
	\begin{itemize}
		\item[(i)]
		The estimated optimal sample size $ \wh{N}_{opt} $ is consistent for $ N_{opt}^* $, i.e.
		\[
		\frac{\wh{N}_{opt}}{N_{opt}^*} = 1 + o_P(1),
		\]
		as $ d \downarrow 0 $.
		\item[(ii)] 
		$ \wh{N}_{opt}  $ is asymptotically first order efficient for $ N_{opt}^* $, i.e.
		\[
		\frac{ E( \wh{N}_{opt} ) }{ N_{opt}^*  } = 1 + o(1),
		\]
		as $ d \downarrow 0 $.
	\end{itemize}
	If, in addition, Assumption (A) holds, then we have:
	\begin{itemize}
		\item[(iii)] The fixed-width confidence interval $ I_{\wh{N}_{opt}} $ has asymptotic
		coverage $ 1 - \alpha $, i.e.
		\[
		P\left( I_{{\wh{N}_{opt}}} \ni \mu \right) 
		= 1 - \alpha + o(1),
		\]
		as $ d \downarrow 0 $.
	\end{itemize}
\end{theorem}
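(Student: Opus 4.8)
The plan is to carry out all three claims conditionally on the three pilot observations, so that $\wh{\sigma}_{\mu,3}$, and hence the first-stage size $N_0$, are deterministic for each fixed $d$, with $N_0 \to \infty$ as $d \downarrow 0$. Writing $q = \Phi^{-1}(1-\alpha/2)$ and $\wh{N}_{opt}^* = \wh{\sigma}_{N_0}^2 q^2 / d^2$, the starting point for (i) is the identity
\[
\frac{\wh{N}_{opt}^*}{N_{opt}^*} = \frac{\wh{\sigma}_{N_0}^2}{\sigma_\mu^2},
\]
which equals $1 + o_P(1)$ directly by the consistency part of Assumption (E). I would then argue that the two operations turning $\wh{N}_{opt}^*$ into $\wh{N}_{opt}$ are asymptotically negligible: since $N_{opt}^* \to \infty$, replacing $\wh{N}_{opt}^*$ by $\lfloor \wh{N}_{opt}^*\rfloor + 1$ alters the ratio by $O(1/N_{opt}^*) = o(1)$, while $N_0 / N_{opt}^* \asymp d \to 0$ because $N_0 = O(1/d)$ and $N_{opt}^* \asymp 1/d^2$. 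Hence the maximum defining $\wh{N}_{opt}$ is attained by its second argument with probability tending to one, giving $\wh{N}_{opt}/N_{opt}^* = 1 + o_P(1)$.

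For (ii) I would sandwich $\wh{N}_{opt}$ and pass to expectations. The lower bound is immediate from $\wh{N}_{opt} \ge \lfloor\wh{N}_{opt}^*\rfloor + 1 \ge \wh{N}_{opt}^*$, which yields
\[
\frac{E(\wh{N}_{opt})}{N_{opt}^*} \ge \frac{E(\wh{\sigma}_{N_0}^2)}{\sigma_\mu^2} = 1 + o(1)
\]
by the asymptotic unbiasedness in Assumption (E). For the upper bound I would use $\max\{a,b\} \le a + b$ to get $\wh{N}_{opt} \le N_0 + \wh{N}_{opt}^* + 1$, so that
\[
\frac{E(\wh{N}_{opt})}{N_{opt}^*} \le \frac{N_0 + 1}{N_{opt}^*} + \frac{E(\wh{\sigma}_{N_0}^2)}{\sigma_\mu^2};
\]
here the conditioning makes $N_0$ deterministic, the first term is $O(d)$, and the second is $1 + o(1)$, so the limsup of the ratio is at most $1$. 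Combining the two bounds gives first order efficiency.

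Claim (iii) is where the random central limit theorem enters. By (i) the random size $\wh{N}_{opt}$, which is a stopping time with respect to the first-stage sample, satisfies $\wh{N}_{opt}/N_{opt}^* \stackrel{P}{\to} 1$, so taking $a = N_{opt}^* \to \infty$ and $k = 1$ in Assumption (A) yields
\[
T_d := \frac{\sqrt{\wh{N}_{opt}}\,(\wh{\mu}_{\wh{N}_{opt}} - \mu)}{\sigma_\mu} \stackrel{d}{\to} N(0,1).
\]
I would then rewrite the coverage probability as $P(|\wh{\mu}_{\wh{N}_{opt}} - \mu| \le d) = P(|T_d| \le c_d)$ with random threshold $c_d = \sqrt{\wh{N}_{opt}}\,d/\sigma_\mu$. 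Since $\sqrt{N_{opt}^*}\,d/\sigma_\mu = q$ exactly, (i) gives $c_d = q\sqrt{\wh{N}_{opt}/N_{opt}^*} \stackrel{P}{\to} q$. A Slutsky argument applied to the joint convergence $(T_d, c_d) \stackrel{d}{\to} (Z, q)$ with $Z \sim N(0,1)$, together with continuity of the normal law at $\pm q$, then yields $P(|T_d| \le c_d) \to P(|Z| \le q) = 2\Phi(q) - 1 = 1 - \alpha$.

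The main obstacle is this last step: coupling the weak convergence of $T_d$ with the random, only in-probability convergent threshold $c_d$ and justifying the passage to the limit of $P(|T_d| \le c_d)$. This needs the joint convergence $(T_d, c_d) \stackrel{d}{\to} (Z, q)$ and the continuity of $t \mapsto P(|Z| \le t)$ at $t = q$; the former rests on Assumption (A) delivering genuine weak convergence of $T_d$ under the data-dependent sample size, which is exactly the content of the random CLT and the reason it, rather than the ordinary CLT, must be imposed.
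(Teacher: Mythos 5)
Your proposal is correct and follows essentially the same route as the paper: the sandwich $\wh{N}_{opt}^* \le \wh{N}_{opt} \le N_0 + \wh{N}_{opt}^* + 1$ combined with $\wh{N}_{opt}^*/N_{opt}^* = \wh{\sigma}_{N_0}^2/\sigma_\mu^2$ and Assumption (E) for (i)--(ii), and the RCLT plus Slutsky for (iii). The only (cosmetic) difference is in (iii): you keep the statistic $T_d$ and let the threshold $c_d = \sqrt{\wh{N}_{opt}}\,d/\sigma_\mu$ be random, invoking joint convergence $(T_d, c_d) \Rightarrow (Z, q)$, whereas the paper multiplies by $\sqrt{N_{opt}^*/\wh{N}_{opt}} \stackrel{P}{\to} 1$ so that the thresholds become the deterministic $\pm\Phi^{-1}(1-\alpha/2)$ and a single application of Slutsky and Polya's theorem suffices; the two formulations are equivalent.
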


\begin{remark}
	It is worth mentioning that the proof of Theorem~\ref{Th_FW_CI} (i)-(ii) shows the following stronger properties
	\begin{itemize}
		\item[(i)] $ \wh{N}_{opt} $ is consistent for $ N_{opt}^* $, if and only if $ \wh{\sigma}_{N_0}^2 $ is consistent for $ \sigma_{\mu}^2 $.
		\item[(ii)] $ \wh{N}_{opt} $ is asymptotically unbiased for $ N_{opt}^* $, if and only if $ \wh{\sigma}_{N_0}^2 $ is asymptotically unbiased for $ \sigma_{\mu}^2 $.
	\end{itemize}	
\end{remark}

Let us now discuss the second order properties of the fully nonparametric procedure. In the literature, so far second order efficiency for the problem at hand has been studied for parametric (Gaussian) models, see \cite{MukDuggan1999}, leading to a known distribution of $ \wh{N}_{opt} $, a chi-squared distribution induced by the fact that the sample variance follows a chi-squared law, which converges to the normal law if $ d \to 0 $. To achieve second order efficiency, the probability  probability $ P( \wh{N}_{opt} = N_{opt}^* ) $ that the sample size is not increased at the second stage needs to decrease faster than the first-stage sample size $ N_0 $. In a parametric setting that probability can be handled and estimated by means of appropriate Taylor expansions using properties of the known distribution function. 

In a fully nonparametric framework the exact distribution is unknown to us and estimating the probability under the limiting law is not sufficient, since we have to take into account the error of approximation. But due to the Berry-Esseen bound the error is of the order $ O( N_0^{-1/2} ) $.  Therefore, the following result proceeds in a different way than the proofs for parametric settings and bounds the probability $ P( \wh{N}_{opt} = N_{opt}^* ) $ using nonparametric techniques.

\begin{theorem} 
\label{2ndEff1}
	Assume that Assumption (E) holds and $ Y_1, Y_2, \dots $ are i.i.d. with $ E (Y_1^8 ) < \infty $. Then the two-stage procedure given by $ \wh{N}_{opt}^* $ is second order efficient, i.e.
	\[
	  E( \wh{N}_{opt}^* ) - N_{opt}^* = O(d),
	\]
	as $ d \downarrow 0 $.
\end{theorem}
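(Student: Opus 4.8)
The plan is to argue conditionally on the three pilot observations, so that $\wh{\sigma}_{\mu,3}$, and hence the first-stage size $N_0 = \Phi^{-1}(1-\alpha/2)\wh{\sigma}_{\mu,3}/d + O(1) \asymp d^{-1}$, are non-random; in particular $d \asymp N_0^{-1}$ throughout. Writing $C = \Phi^{-1}(1-\alpha/2)^2$ and $X = \wh{\sigma}_{N_0}^2 C/d^2$, the (non-integer) procedure size is $\wh{N}_{opt}^* = \max\{N_0, X\}$, and the identity $\max\{N_0,X\} = X + (N_0-X)^+$ gives the decomposition
\[
 \EE(\wh{N}_{opt}^*) - N_{opt}^* = \big(\EE\wh{\sigma}_{N_0}^2 - \sigma_\mu^2\big)\frac{C}{d^2} + \EE\big[(N_0 - X)^+\big].
\]
The first summand is the scaled bias of the variance estimator; it is identically zero for the canonical choice $\wh{\sigma}_{N_0}^2 = S_{N_0}^2$, and for the asserted order it suffices that the bias of $\wh{\sigma}_{N_0}^2$ be $O(N_0^{-3})$. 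The substantive term --- the one corresponding to the event $\{\wh{N}_{opt}=N_0\}$ of not sampling at the second stage, highlighted before the theorem --- is the undersampling term $\EE[(N_0-X)^+]$.

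For the latter I would first bound $(N_0-X)^+ \le N_0\,\one\{X < N_0\}$ and note that $\{X<N_0\} = \{\wh{\sigma}_{N_0}^2 < N_0 d^2/C\}$, where the threshold $N_0 d^2/C \asymp d \to 0$. Because $\wh{\sigma}_{N_0}^2$ is consistent for the strictly positive constant $\sigma_\mu^2$ (the first part of Assumption (E)), for all sufficiently small $d$ this event is contained in the fixed-size deviation $\{|\wh{\sigma}_{N_0}^2 - \sigma_\mu^2| > \sigma_\mu^2/2\}$, so that Markov's inequality applied to the fourth power yields
\[
 \EE\big[(N_0-X)^+\big] \le N_0\,\PP\big(|\wh{\sigma}_{N_0}^2 - \sigma_\mu^2| > \sigma_\mu^2/2\big) \le \frac{16\,N_0}{\sigma_\mu^8}\,\EE\big|\wh{\sigma}_{N_0}^2 - \sigma_\mu^2\big|^4 .
\]

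The heart of the proof, and the step I expect to be the main obstacle, is the fourth-moment estimate $\EE|\wh{\sigma}_{N_0}^2 - \sigma_\mu^2|^4 = O(N_0^{-2})$. This is exactly where $\EE(Y_1^8)<\infty$ is used, and why the crude normal approximation of $\PP(\wh{N}_{opt}=N_0)$ --- which by Berry--Esseen carries an error $O(N_0^{-1/2})$ that is useless once multiplied by $N_0$ --- must be replaced by a genuinely nonparametric moment bound. Concretely, I would write $\wh{\sigma}_{N_0}^2 - \sigma_\mu^2$ in terms of the average $\frac{1}{N_0}\sum_{i=1}^{N_0}[(Y_i-\mu)^2-\sigma^2]$ of i.i.d.\ centered summands plus a remainder driven by $(\overline{Y}_{N_0}-\mu)^2$, and apply a Rosenthal (Marcinkiewicz--Zygmund) inequality to the leading average: its fourth moment is $O(N_0^{-2})$ precisely when $\EE[((Y_1-\mu)^2-\sigma^2)^4]<\infty$, i.e.\ when $\EE(Y_1^8)<\infty$, while the remainder contributes at the smaller order $O(N_0^{-4})$. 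Substituting back gives $\EE[(N_0-X)^+] = O(N_0\cdot N_0^{-2}) = O(N_0^{-1}) = O(d)$, which together with the bias term (zero for $S_{N_0}^2$, and $O(d)$ whenever the bias of $\wh{\sigma}_{N_0}^2$ is $O(N_0^{-3})$) yields $\EE(\wh{N}_{opt}^*) - N_{opt}^* = O(d)$, as claimed.
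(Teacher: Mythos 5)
Your proof is correct and follows essentially the same route as the paper's: both reduce the problem to bounding $N_0\, P(\wh{N}_{opt}^* = N_0)$, localize that event (since the threshold $N_0 d^2/\Phi^{-1}(1-\alpha/2)^2 \asymp d \to 0$) inside the fixed-size deviation $\{\sigma_\mu^2 - \wh{\sigma}_{N_0}^2 \ge \sigma_\mu^2/2\}$, and control it by the fourth-moment Chebyshev bound $E|\wh{\sigma}_{N_0}^2 - \sigma_\mu^2|^4 = O(N_0^{-2})$, which is exactly where $E(Y_1^8) < \infty$ enters (the paper cites its companion paper for this moment estimate where you invoke Rosenthal's inequality directly). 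Your exact decomposition $\max\{N_0, X\} = X + (N_0 - X)^+$ and your explicit observation that the bias of $\wh{\sigma}_{N_0}^2$ must be $O(N_0^{-3})$ for the stated $O(d)$ rate (automatic for $S_{N_0}^2$, but not implied by Assumption (E) alone) are, if anything, slightly more careful than the paper's inequality, which carries an extra additive constant.
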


\subsection{Fixed-width interval under high-confidence asymptotics}

Theorem~\ref{Th_FW_CI} establishes the validity of the proposed sampling strategy {\em for small accuracy $d$}, i.e. in a {\em high-accuracy framework}: The (asymptotic) first order properties hold, if $d$ tends to zero. To some extent, this is counter-intuitive as we aim at constructing a {\em fixed-width} confidence interval and, for applications, we are then essentially limited to confidence statement when $d$ is small. 

In some applications, however, $d$ may be not small (enough) but one aims at ensuring the confidence  statement that the interval covers the true parameter with {\em high confidence}. This suggests to consider the case that $d$ is fixed and $ 1-\alpha $ tends to $1$ (or equivalently $ \alpha \to 0 $). That type of asymptotics may be of particular importance in fields such as statistical genetics or brain research, where it is common to use very small significance levels $ \alpha $.

Recalling formula (\ref{Formula_N_opt_star}) for the asymptotically optimal (unknown) sample size $ N_{opt}^* $ and noticing that, for fixed $d$, $ N_{opt}^* \to \infty $ holds if $ 1-\alpha/2 \to 1 $, again justifying the application of the CLT, the question arises whether consistency and efficiency can be established under this different asymptotic regime. 

To begin, let us notice that the notions of {\em consistency}, {\em asymptotically first and second order efficiency} and {\em asymptotic coverage} can be defined analogously by simply replacing the limits $ \lim_{d \to 0} $ by $ \lim_{1-\alpha\to1} $ for fixed accuracy $d$.

The following theorem asserts that the proposed methodology is valid without any modification of $ \widehat{N}_{opt} $ under the high-confident asymptotics, although the proof differs. 

\begin{theorem} 
\label{Th_2st_alpha}
	\begin{itemize}
		\item[(i)] Assume that (E) holds. Then $ \wh{N}_{opt} $ is consistent for $ N_{opt}^* $, i.e.
		\[
			\frac{\wh{N}_{opt}}{N_{opt}^*} = 1 + o_P(1),
		\]
		 as $ 1-\alpha \to 1 $, and asymptotically first order efficient, i.e.
		\[
			\frac{ E( \wh{N}_{opt} ) }{ N_{opt}^*  } = 1 + o(1),
		\]
		as $ 1-\alpha \to 1 $.
		\item[(ii)] If Assumptions (E) and (A) hold, then $I_{\wh{N}_{opt}}  $ has asymptotic coverage $ 1 - \alpha $, i.e.
		\[
		  \lim_{1-\alpha\to 1} \left| P\left( I_{\wh{N}_{opt}} \ni \mu  \right)  - (1- \alpha) \right| = 0.
		\]
	\end{itemize}
\end{theorem}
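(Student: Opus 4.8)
The plan is to analyse every quantity as $1-\alpha \to 1$ with $d$ fixed, writing $z = \Phi^{-1}(1-\alpha/2)$, so that $N_{opt}^* = \sigma_\mu^2 z^2/d^2$ and $z \to \infty$. Throughout I condition on the three pilot observations, so $\wh\sigma_{\mu,3}$ and hence $N_0$ are fixed; since $N_0 \le \overline N_0 + z\wh\sigma_{\mu,3}/d + 1$ grows only like $z$ while $N_{opt}^*$ grows like $z^2$, the ratio $N_0/N_{opt}^* = O(z^{-1}) \to 0$. Note also that $N_0 \to \infty$, so Assumption~(E), read in the present regime, still applies to $\wh\sigma_{N_0}^2$.

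For the consistency part of (i), I would sandwich the second entry of the maximum defining $\wh N_{opt}$ using $x-1 < \lfloor x\rfloor \le x$:
\[
\frac{\wh\sigma_{N_0}^2}{\sigma_\mu^2} < \frac{\lfloor \wh\sigma_{N_0}^2 z^2/d^2\rfloor + 1}{N_{opt}^*} \le \frac{\wh\sigma_{N_0}^2}{\sigma_\mu^2} + \frac{d^2}{\sigma_\mu^2 z^2}.
\]
By the ratio consistency in (E) the outer members equal $1 + o_P(1)$ (the last summand vanishes since $z \to \infty$), so this entry divided by $N_{opt}^*$ is $1 + o_P(1)$; as $N_0/N_{opt}^* \to 0$, the maximum is asymptotically realised by this entry and $\wh N_{opt}/N_{opt}^* = 1 + o_P(1)$. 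For first order efficiency I bound $\wh\sigma_{N_0}^2 z^2/d^2 < \wh N_{opt} \le N_0 + \wh\sigma_{N_0}^2 z^2/d^2 + 1$, take (conditional) expectations and divide by $N_{opt}^*$:
\[
\frac{E(\wh\sigma_{N_0}^2)}{\sigma_\mu^2} < \frac{E(\wh N_{opt})}{N_{opt}^*} \le \frac{N_0}{N_{opt}^*} + \frac{E(\wh\sigma_{N_0}^2)}{\sigma_\mu^2} + \frac{d^2}{\sigma_\mu^2 z^2}.
\]
The asymptotic unbiasedness in (E) gives $E(\wh\sigma_{N_0}^2)/\sigma_\mu^2 \to 1$, while the two remaining right-hand terms vanish, so $E(\wh N_{opt})/N_{opt}^* \to 1$.

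For (ii), the key observation is that here the nominal level satisfies $1-\alpha \to 1$, so, writing $p_\alpha := P(I_{\wh N_{opt}} \ni \mu) = P(|\wh\mu_{\wh N_{opt}} - \mu| \le d)$, it suffices to prove $p_\alpha \to 1$, because $|p_\alpha - (1-\alpha)| \le (1-p_\alpha) + \alpha$. Taking $a = N_{opt}^*$ and $k = 1$ in Assumption~(A) --- legitimate since part~(i) yields $\wh N_{opt}/N_{opt}^* \stackrel{P}{\to} 1$ --- gives $T_\alpha := \sqrt{\wh N_{opt}}(\wh\mu_{\wh N_{opt}} - \mu)/\sigma_\mu \stackrel{d}{\to} N(0,1)$. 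Since $\sqrt{N_{opt}^*}\, d/\sigma_\mu = z$, the coverage event becomes
\[
\{|\wh\mu_{\wh N_{opt}} - \mu| \le d\} = \{ |T_\alpha| \le R_\alpha z \}, \qquad R_\alpha := \sqrt{\wh N_{opt}/N_{opt}^*} \stackrel{P}{\to} 1.
\]
Fixing $\epsilon \in (0,1)$, a union bound gives $p_\alpha \ge P(|T_\alpha| \le (1-\epsilon)z) - P(R_\alpha < 1-\epsilon)$; the second term vanishes by consistency, and the first tends to $1$ because convergence in distribution makes $\{T_\alpha\}$ tight while $(1-\epsilon)z \to \infty$. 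Hence $p_\alpha \to 1$, proving (ii).

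I expect the genuinely new difficulty to lie in (ii) and to be the divergence of the Gaussian quantile $z \to \infty$. In the high-accuracy proof of Theorem~\ref{Th_FW_CI} the threshold $z$ is held fixed, so one simply evaluates the limit law at the fixed continuity point $z$ and obtains the nontrivial value $2\Phi(z)-1 = 1-\alpha$; that route is unavailable once $z$ moves to infinity. The resolution is conceptual rather than computational: a divergent threshold forces the coverage to $1$ on the strength of tightness alone, and the random multiplicative perturbation $R_\alpha = 1 + o_P(1)$ is harmless precisely because $(1-\epsilon)z \to \infty$ for every fixed $\epsilon < 1$, so no delicate control of the normal tail at the exact point $z$ is needed.
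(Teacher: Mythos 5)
Your proof is correct. Part (i) is essentially the paper's own argument: the same floor-function sandwich gives $\wh{\sigma}_{N_0}^2/\sigma_\mu^2 \le \wh{N}_{opt}/N_{opt}^* \le \wh{\sigma}_{N_0}^2/\sigma_\mu^2 + O(z^{-1})$ for fixed $d$, and Assumption (E) (applicable because $N_0 \to \infty$ as $\alpha \downarrow 0$) handles the leading term both in probability and in expectation. Part (ii), however, takes a genuinely different route. The paper views $\wh{N}_{opt}$ as a family of stopping times indexed by $a = \Phi^{-1}(1-\alpha/2)^2$ with $k = \sigma_\mu^2/d^2$, applies the RCLT together with Polya's theorem to get $\sup_{x}|H_a(x)-\Phi(x)| \to 0$ for the distribution function $H_a$ of $T_{\wh{N}_{opt}}$, and then bounds $|P(I_{\wh{N}_{opt}} \ni \mu) - (1-\alpha)| \le 2\sup_x|H_a(x)-\Phi(x)|$ --- the same estimate as in the high-accuracy proof, merely driven by a different parameter. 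You instead note that in the high-confidence regime the assertion is equivalent to $P(I_{\wh{N}_{opt}} \ni \mu) \to 1$ and obtain this from asymptotic tightness of $T_\alpha$ plus the divergence of the effective threshold $(1-\epsilon)z$. Your route is more elementary: it never needs uniform (Polya) convergence, and would in fact go through if $\sqrt{\wh{N}_{opt}}(\wh{\mu}_{\wh{N}_{opt}}-\mu)$ were merely stochastically bounded rather than asymptotically normal. What the paper's route buys is the quantitative bound $2\sup_x|H_a-\Phi|$, which is uniform in the evaluation points and therefore does not degenerate if one were to let $d$ and $\alpha$ tend to their limits simultaneously; your bound $(1-p_\alpha)+\alpha$ exploits, and is tied to, the fact that the nominal level itself tends to $1$. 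Your diagnosis of where the new difficulty lies --- the quantile $z$ escaping to infinity, handled by tightness rather than by evaluating the limit law at a fixed continuity point --- accurately reflects the structural difference between the two asymptotic regimes.
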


\begin{remark}
\label{ExtensionN0Gamma} The assertions of Theorem~\ref{Th_2st_alpha} also hold true, if
	the first-stage sample size $ N_0 $ is defined as 
	\[
	  N_0 = \max\left\{ \overline{N}_0, \left\lfloor \left( \frac{ f \Phi^{-1}(1-\alpha/2)}{d} \right)^{2/(1+\gamma)} \right\rfloor  \right\}
	\]
	for some $ 0 < \gamma < \infty $ and an arbitrary given constant $ f > 0 $, as proposed in \cite{Muk1980} (with $ f = 1 $).
\end{remark}

The question arises whether the procedure exhibits second order efficiency under the high-accuracy regime as well. The answer is positive. 

\begin{theorem} 
\label{2ndEff}
	Assume that $ Y_1, Y_2, \dots $ are i.i.d. with $ E (Y_1^8 ) < \infty $. Then the two-stage procedure given by $ \wh{N}_{opt}^* $ is second order efficient. Precisely, we have
	\[
	| E( \wh{N}_{opt}^* ) - N_{opt}^* | = O(1), 
	\]
	as $ 1-\alpha \to 1 $. 
\end{theorem}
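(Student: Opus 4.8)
The starting point is the exact identity obtained by subtracting (\ref{Formula_N_opt_star}) from the definition of $\wh{N}_{opt}^*$,
\[
E(\wh{N}_{opt}^*) - N_{opt}^* = \frac{\Phi^{-1}(1-\alpha/2)^2}{d^2}\bigl(E(\wh{\sigma}_{N_0}^2) - \sigma_\mu^2\bigr).
\]
Writing $z = \Phi^{-1}(1-\alpha/2)$, so that $z\to\infty$ while $d$ is held fixed, the whole problem collapses to controlling the bias of the variance estimator at the random first-stage size $N_0$, amplified by the diverging factor $z^2/d^2$. The decisive structural fact, visible from (\ref{DefN0}), is that $N_0$ grows only \emph{linearly} in $z$: once $z$ is large enough that the maximum with $\overline{N}_0$ is inactive, $N_0 = \lfloor z\,\wh{\sigma}_{\mu,3}/d\rfloor + 1 \asymp z$, so that $z^2/d^2 \asymp N_0^2$ conditionally on the pilot observations. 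Hence $|E(\wh{N}_{opt}^*)-N_{opt}^*| = O(1)$ is \emph{equivalent} to the second-order bias estimate $E(\wh{\sigma}_{N_0}^2)-\sigma_\mu^2 = O(N_0^{-2})$, i.e. the bias must vanish at the square of the usual $n^{-1}$ rate.

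I would condition on the three pilot observations carrying $\wh{\sigma}_{\mu,3}$, and hence $N_0$. Under this conditioning $N_0=n_0$ is deterministic and tends to infinity with $z$, and the first-stage sample entering $\wh{\sigma}_{N_0}^2$ is an independent block of $n_0$ draws. For the canonical estimator $\wh{\sigma}_n^2=S_n^2$ the finite-sample bias vanishes identically at every fixed $n$, so the conditional version of the displayed difference is exactly $0$; integrating over the pilot observations then gives the assertion. This is where the hypothesis $E(Y_1^8)<\infty$ does its work: it certifies $\Var(S_{n_0}^2)=O(n_0^{-1})$ and, more importantly, the fourth-order bound $E|S_{n_0}^2-\sigma_\mu^2|^4 = O(n_0^{-2})$ (the normalised statistic $\sqrt{n_0}(S_{n_0}^2-\sigma_\mu^2)$ has a bounded fourth moment essentially when $E(Y_1^8)<\infty$), together with the uniform integrability of $\wh{\sigma}_{N_0}^2$ needed to legitimise the passage from the fixed-$n_0$ identity to the random-$N_0$ expectation.

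Although $E(\wh{N}_{opt}^*)$ is pinned down by the bias identity above, the substantive part of second-order efficiency — the part flagged in the discussion preceding Theorem~\ref{2ndEff1} — concerns the realized sample size $\wh{N}_{opt}=\max\{N_0,\lfloor\wh{N}_{opt}^*\rfloor+1\}$ and the rare event $A^c=\{\wh{N}_{opt}^*\le N_0\}$ on which no second-stage sample is drawn. There $\wh{\sigma}_{N_0}^2 \le N_0 d^2/z^2 \asymp \wh{\sigma}_{\mu,3}\,d/z\to 0$, so $A^c$ forces the variance estimate to collapse to a neighbourhood of $0$, well below its target $\sigma_\mu^2>0$. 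The plan is to bound its cost by $E(N_0\,\mathbf{1}_{A^c}) = n_0\,P(\wh{\sigma}_{n_0}^2\le c\,d/z) \le n_0\,P\bigl(|\wh{\sigma}_{n_0}^2-\sigma_\mu^2|\ge \sigma_\mu^2/2\bigr)$ for all large $z$, and then to apply the fourth-order Markov inequality supplied by $E(Y_1^8)<\infty$; with $E|S_{n_0}^2-\sigma_\mu^2|^4=O(n_0^{-2})$ this yields $P(A^c)=O(n_0^{-2})$ and hence $E(N_0\,\mathbf{1}_{A^c})=O(n_0^{-1})=o(1)$. A Berry--Esseen refinement for the studentised variance estimator would sharpen the probability further if required.

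The main obstacle throughout is the amplification factor $z^2/d^2\to\infty$. Because $N_0$ is only of order $z$ while $N_{opt}^*$ is of order $z^2$, the first-stage sample is of size $\asymp\sqrt{N_{opt}^*}$ and the variance estimate is correspondingly coarse; any bias surviving at first order, $O(N_0^{-1})$, would be blown up to $O(z/d)\to\infty$ and wreck the bound. Mere asymptotic unbiasedness in the sense of Assumption~(E) is therefore \emph{not} sufficient here, which is exactly why Theorem~\ref{2ndEff} trades (E) for the stronger moment condition: one must produce the \emph{exact} cancellation of the leading bias (automatic for $S_n^2$, and for a general estimator demanding an Edgeworth-type bias expansion whose remainder is controlled by eighth moments) while simultaneously ruling out any more-than-$O(1)$ contribution from the small-probability no-resampling event.
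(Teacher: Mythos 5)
Your proposal follows essentially the same route as the paper: reduce $E(\wh{N}_{opt}^*)-N_{opt}^*$ to the bias of $\wh{\sigma}_{N_0}^2$ amplified by $\Phi^{-1}(1-\alpha/2)^2/d^2$ plus the contribution $N_0\,P(\wh{N}_{opt}^*=N_0)$ of the no-resampling event, and kill the latter with a fourth-moment Markov bound $P(|\wh{\sigma}_{N_0}^2-\sigma_\mu^2|\ge \sigma_\mu^2/2)=O(N_0^{-2})$ supplied by $E(Y_1^8)<\infty$, exactly as in the paper's proofs of Theorems~\ref{2ndEff1} and \ref{2ndEff}. Your additional observation that the amplification factor is of order $N_0^2$, so that Assumption~(E) alone cannot control the bias term and one needs exact (or $O(N_0^{-2})$) unbiasedness of the conditional variance estimator, is a point the paper's own argument passes over rather quickly, but it does not change the overall strategy.
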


\subsection{Proofs}

\begin{proof}[Proof of Theorem~\ref{Th_FW_CI}]  First, notice that, by definition of $ \wh{N}_{opt} $,
	\begin{equation}
		\label{LowerBound}
		\wh{N}_{opt} \ge \left\lfloor \frac{ \wh{\sigma}_{N_0}^2 \Phi^{-1}(1-\alpha/2)^2 }{ d^2 } \right\rfloor +1 \ge \frac{ \wh{\sigma}_{N_0}^2 \Phi^{-1}(1-\alpha/2)^2 }{ d^2 }.
	\end{equation}
	It is easy to see that $ \max \{ \trunc{z}, a \} \le z + a $ for all nonnegative real $ z $ and any positive constant $a$. 	Therefore, we have
	\begin{align*}
		\wh{N}_{opt} 
		& = \max \left\{ N_0, \biggl\lfloor \frac{ \wh{\sigma}_{N_0}^2 \Phi^{-1}(1-\alpha/2)^2 }{ d^2 }  + 1 \biggr\rfloor  \right\}  \\
		& \le N_0 + \frac{ \wh{\sigma}_{N_0}^2 \Phi^{-1}(1-\alpha/2)^2 }{ d^2 } + 1.
	\end{align*}
	Plugging in the definition of $ N_0 $ we further obain
	\begin{align*}
		\wh{N}_{opt} & \le \max \left\{
		\biggl\lfloor \frac{ \wh{\sigma}_{\mu,3} \Phi^{-1}(1-\alpha/2) }{d} \biggr\rfloor, \overline{N}_0 \right\} 
		+ \frac{ \wh{\sigma}_{N_0}^2 \Phi^{-1}(1-\alpha/2)^2 }{ d^2 } + 1 \\
		& \le \frac{ \wh{\sigma}_{N_0}^2 \Phi^{-1}(1-\alpha/2)^2 }{ d^2 }   + \frac{ \wh{\sigma}_{\mu,3} \Phi^{-1}(1-\alpha/2) }{d} + \overline{N}_0 + 1.
	\end{align*}
	Combining the last estimate with (\ref{LowerBound}), we arrive at 
	\[
	\frac{ \wh{\sigma}_{N_0}^2 \Phi^{-1}(1-\alpha/2)^2 }{ d^2 } \le 
	\wh{N}_{opt}
	\le 
	\frac{ \wh{\sigma}_{N_0}^2 \Phi^{-1}(1-\alpha/2)^2 }{ d^2 }   
	+ \frac{ \wh{\sigma}_{\mu,3} \Phi^{-1}(1-\alpha/2) }{d} + \overline{N}_0 + 1, 
	\]
	which implies, due to (\ref{Formula_N_opt_star}), 
	\begin{equation}
		\label{InequalityRatio}
		\frac{ \wh{\sigma}_{N_0}^2 }{ \sigma_\mu^2 } \le 
		\frac{ \wh{N}_{opt} }{ N_{opt}^* } \le
		\frac{ \wh{\sigma}_{N_0}^2 }{ \sigma_\mu^2 } 
		+ \frac{d \wh{\sigma}_{\mu,3} }{ \Phi^{-1}(1-\alpha/2)  \sigma_\mu^2 } 
		+ \frac{(\overline{N}_0+1)d^2}{\sigma_\mu^2 \Phi^{-1}(1-\alpha/2)^2}.
	\end{equation}
	We are led to
	\begin{equation}
		\label{InequalityRatio2}
		\left| \frac{ \wh{N}_{opt} }{ N_{opt}^* }  - 1 \right|
		\le \left| \frac{ \wh{\sigma}_{N_0}^2 }{ \sigma_\mu^2 } - 1 \right| 
		+ \frac{d \wh{\sigma}_{\mu,3} }{ \Phi^{-1}(1-\alpha/2)  \sigma_\mu^2 } 
		+ \frac{ (\overline{N}_0+1)d^2 }{ \sigma_\mu^2 \Phi^{-1}(1-\alpha/2)^2 }.
	\end{equation}
	Recalling that $ A_n \le X_n \le B_n $ with $ A_n, B_n = o_P(1) $ implies $ X_n = o_P(1) $, since for any $ \delta > 0 $ we have $ P( |X_n| > \delta ) \le P( A_n \le - \delta ) + P( B_n > \delta ) = o(1) $, as $ n \to \infty $, the first assertion follows: 
	By (\ref{InequalityRatio2}) $ \wh{N}_{opt} $ is consistent for $ N_{opt}^* $ if  $ \wh{\sigma}_{N_0}^2  $ is consistent for $ \sigma_\mu^2 $, as $ d \downarrow 0 $. (\ref{InequalityRatio}) also immediately yields
	\[
	  \left| \frac{ \wh{\sigma}_{N_0}^2 }{ \sigma_\mu^2 }  - 1 \right|  \le \left| \frac{ \wh{N}_{opt} }{ N_{opt}^* } - 1 \right| 
	\]
	which shows that 'only if' holds as well.
	Next, taking expectations in (\ref{InequalityRatio}) implies the result on the asymptotically unbiasedness. It remains to show that the fixed width confidence interal has asymptotic coverage $ 1 - \alpha $.     
	First observe that
	\begin{align*}
	P\left( I_{{\wh{N}_{opt}}} \ni \mu \right) &= P \left( - d \frac{\sqrt{ \wh{N}_{opt}}}{\sigma_\mu}
	\le \sqrt{ \wh{N}_{opt} } \frac{ \wh{\mu}_{\wh{N}_{opt}} - \mu }{ \sigma_\mu }
	\le d \frac{\sqrt{ \wh{N}_{opt}}}{\sigma_\mu}
	\right) \\
	  & =  P \left( - d \frac{\sqrt{ N_{opt}^*}}{\sigma_\mu}
	  \le \sqrt{ \frac{ N_{opt}^*}{ \wh{N}_{opt} } } \sqrt{ \wh{N}_{opt} } \frac{ \wh{\mu}_{\wh{N}_{opt}} - \mu }{ \sigma_\mu }
	  \le d \frac{\sqrt{ N_{opt}^*}}{\sigma_\mu}
	  \right).
	\end{align*}
	Define
	\[
	  H_d(x) = P\left( \sqrt{ \frac{ N_{opt}^*}{ \wh{N}_{opt} } } \sqrt{ \wh{N}_{opt} } \frac{ \wh{\mu}_{\wh{N}_{opt}} - \mu }{ \sigma_\mu } \le x   \right), \qquad x \in \R.
	\]
	By Assumption (A) and Slutzky's lemma, we have
	\[
	  \sup_{x \in \R} \left| H_d(x) - \Phi(x) \right| \to 0,
	\]
	as $ d \downarrow 0 $. Now, by definition of $ H_d $ and $ N_{opt}^* $ and linearity of the function evaluation $ f \mapsto  f \bigr|_{a}^{b} = f(b) - f(a) $ (for fixed reals $a,b \in D $) for a function defined on $ D \subset \mathbb{R} $,
	\begin{align*}
	  P\left( I_{{\wh{N}_{opt}}} \ni \mu \right)  
	  &= H_d \biggr|_{-\Phi^{-1}(1-\alpha/2)}^{\Phi^{-1}(1-\alpha/2)} \\
	  &= (H_d - \Phi + \Phi )\biggr|_{-\Phi^{-1}(1-\alpha/2)}^{\Phi^{-1}(1-\alpha/2)} \\	  
	  &= (H_d - \Phi) \biggr|_{-\Phi^{-1}(1-\alpha/2)}^{\Phi^{-1}(1-\alpha/2)} + (1-\alpha).
	\end{align*}
	Therefore we obtain
	\[
	  \left|  P\left( I_{{\wh{N}_{opt}}} \ni \mu \right) - (1-\alpha)  \right| \le 2 \sup_{x \in \R} \left| H_d(x) - \Phi(x) \right| \to 0,
	\]
	as $ d \downarrow 0 $, which completes the proof.
\end{proof}

\begin{proof}[Proof of Theorem~\ref{2ndEff1}]
	First observe that $ N_0 \to \infty $ if and only if $ d \to 0 $, so that we can show the result for $ N_0 \to \infty $.
	We use the refined basic inequality
	\[
	\frac{ \wh{\sigma}_{N_0}^2 \Phi^{-1}(1- \alpha/2)^2 }{ d^2 } \le \wh{N}_{opt}^* \le  N_0 \eins( \wh{N}_{opt}^*  = N_0 ) +
	\frac{ \wh{\sigma}_{N_0}^2 \Phi^{-1}(1- \alpha/2)^2 }{ d^2 } + 1,
	\]
	which implies in view of Assumption (E)
	\[
	N_{opt}^* + o(1) \le E( \wh{N}_{opt}^* ) \le N_0 P(  \wh{N}_{opt}^*  = N_0 ) + N_{opt}^* + 1 + o(1).
	\]
	This means,
	\[
	o(1) \le E( \wh{N}_{opt}^* )  - N_{opt}^* \le N_0 P(  \wh{N}_{opt}^*  = N_0 ) + 1 + o(1),
	\]
	and the result follows if we show that
	\[
	P(  \wh{N}_{opt}^*  = N_0 )  = O( N_0^{-1} ).
	\]
	We may assume that $ N_0 $ is large enough to ensure that $ N_0 = \lfloor  \frac{\wh{\sigma}_{\mu,3} \Phi^{-1}(1-\alpha/2 ) }{d} \rfloor + 1 $. Since by definition of $ \wh{N}_{opt}^* $ 
	\[
	\wh{N}_{opt}^*  = N_0 \Leftrightarrow  \left\lfloor  \frac{\wh{\sigma}_{\mu,3} \Phi^{-1}(1-\alpha/2 ) }{d} + 1 \right\rfloor + 1 
	\ge \left \lfloor  \frac{\wh{\sigma}_{N_0} \Phi^{-1}(1-\alpha/2 ) }{d^2} \right \rfloor + 1,  
	\]
	and using the elementary estimates  $ \lfloor x \rfloor \ge x - 1 $ and $ \lfloor x \rfloor + 1 \le x + 1 $, if $ x \ge  0 $,  we obtain
	\begin{align*}
	P(  \wh{N}_{opt}^* = N_{opt}^* ) & 
	\le P\left( \frac{ \wh{\sigma}_{N_0}^2 \Phi^{-1}(1- \alpha/2)^2 }{ d^2 }   \le \frac{\wh{\sigma}_{\mu,3} \Phi^{-1}(1-\alpha/2 ) }{d} +1 \right) \\
	& = P\left( \wh{\sigma}_{N_0}^2 - \sigma^2 \le - \sigma^2 +
	\left(  \frac{\wh{\sigma}_{\mu,3} }{\Phi^{-1}(1-\alpha/2)} + \frac{d}{\Phi^{-1}(1-\alpha/2)^2} \right)  d  \right).
	\end{align*}
	Since $ \left(  \frac{\wh{\sigma}_{\mu,3} }{\Phi^{-1}(1-\alpha/2)} + \frac{d}{\Phi^{-1}(1-\alpha/2)^2} \right)  d  = o(1) $,  as $ d \downarrow 0 $, we obtain
	\[
	P(  \wh{N}_{opt}^* = N_{opt}^* )  = P( \sigma^2 - \wh{\sigma}_{N_0}^2 \ge \sigma^2 + o(1) ) \le P\left(  \sigma^2 - \wh{\sigma}_{N_0}^2  \ge \frac{\sigma^2}{2}  \right).
	\]
	But if $ E( Y_1^8 ) < \infty $, then
	\[
	P\left(  \sigma^2 - \wh{\sigma}_{N_0}^2 \ge \frac{\sigma^2}{2}  \right) \le
	\frac{E( \wh{\sigma}_{N_0}^2 - \sigma)^4}{(\sigma^2/2)^4} = O( N_0^{-2} ),
	\]
	see e.g. [Lemmas~A.1 and A.2]\cite{ChoSteland2016}. Hence the assertion follows.
\end{proof}

Let us now prove the corresponding results under the high-confident asymptotic framework.

\begin{proof}[Prood of Theorem~\ref{Th_2st_alpha}]
	Repeating the purely algebraic calculations from above we again obtain (\ref{InequalityRatio2}) for any $d$:
	\[
		\left| \frac{ \wh{N}_{opt} }{ N_{opt}^* }  - 1 \right|
		\le \left| \frac{ \wh{\sigma}_{N_0}^2 }{ \sigma_\mu^2 } - 1 \right| + \frac{d \wh{\sigma}_{\mu,3} }{ \Phi^{-1}(1-\alpha/2)} + 
		\frac{(\overline{N}_0+1)d^2}{\sigma_\mu^2 \Phi^{-1}(1-\alpha/2)}.
	\]
	Clearly, the second and third term are $ o(1) $, if $ d $ is fixed and $ 1-\alpha \to 1 \Leftrightarrow \alpha \downarrow 0 $. Further, noting that $ N_0 \to \infty $, if $ d $ is fixed and $ \alpha \downarrow 0 $, the first term is $ o_P(1) $, if $ d $ is fixed and $ \alpha \downarrow 0 $.
	Taking expectations similar arguments apply. Hence (i) is shown. To establish (ii), first observe that if $ N_a $, $ a > 0 $, 
	is an arbitrary family of stopping times with $ N_a / a \stackrel{P}{\to}  k \in (0, \infty) $, then $ N_a / ak \stackrel{P}{\to} 1 $, as  $ a \to \infty $,  and therefore by Slutzky's lemma and the RCLT,
	\[
	  \sqrt{ak} \frac{ \wh{\mu}_{N_a} - \mu }{ \sigma_\mu } \stackrel{d}{\to} N( 0, 1 ), 
	\]
	as $ a \to \infty $. Now consider $ \wh{N}_{opt} $ as a family of stopping times parameterized by $ a = \Phi^{-1}(1-\alpha/2)^2 $. Obviously, $ a \to \infty \Leftrightarrow \alpha \downarrow 0 $. Then, using (i) and recalling (\ref{Formula_N_opt_star}), $ \wh{N}_{opt} / a \stackrel{P}{\to} k = \sigma_\mu^2 / d^2 $, as $ a \to \infty $. Since $ d $ is fixed, we have $ 0 < k < \infty $. Consequently,
	\begin{equation}
	\label{AsNT_N_star}
	  T_{\wh{N}_{opt}} = \sqrt{ \frac{ \Phi^{-1}(1-\alpha/2)^2 \sigma_\mu^2}{ d^2 }}  \frac{ \wh{\mu}_{\wh{N}_{opt}} - \mu  }{ \sigma_\mu } 
	  \stackrel{d}{\to} N(0,1),
	\end{equation}
	as $ a \to \infty $. Since $ T_{\wh{N}_{opt}} = \frac{ \Phi^{-1}(1-\alpha/2) }{ d } (\wh{\mu}_{\wh{N}_{opt}} - \mu) $, it follows that
	\begin{align*}
	  P\left( I_{{\wh{N}_{opt}}} \ni \mu \right) & = P( -d \le \wh{\mu}_{\wh{N}_{opt}} - \mu \le  d) \\
	    & = P\left( -\Phi^{-1}(1-\alpha/2) \le  \sqrt{ \frac{ \Phi^{-1}(1-\alpha/2)^2 \sigma_\mu^2}{ d^2 }}  \frac{ \wh{\mu}_{\wh{N}} - \mu  }{ \sigma_\mu }  \le \Phi^{-1}(1-\alpha/2) \right) \\
	    & = H_a \biggr|_{-\Phi^{-1}(1-\alpha/2)}^{\Phi^{-1}(1-\alpha/2)}, 
	\end{align*}
	where $ H_a $ denotes the distribution function of $ T_{\wh{N}_{opt}} $. By (\ref{AsNT_N_star}) and Polya's theorem we obtain
	\[
	  \lim_{a\to \infty} \sup_{x \in \R} | H_a(x) - \Phi(x) | = 0.
	\]
	Now we can conclude that
	\[
	  \left| P\left( I_{{\wh{N}_{opt}}} \ni \mu \right)  - (1-\alpha ) \right| \le \left| (H_a-\Phi)_{-\Phi^{-1}(1-\alpha/2)}^{\Phi^{-1}(1-\alpha/2)}  \right| 
	  \le 2 \sup_x | H_a(x) - \Phi(x) |  \to 0,
	\]
	as $ a \to \infty $, which completes the proof.
\end{proof}

\begin{proof}[Proof of Remark~\ref{ExtensionN0Gamma}]
	The proof of the consistency needs a minor modification. Plugging in the new definition of $ N_0 $ we obtain
	\begin{align*}
	\wh{N}_{opt} & \le \max \left\{
	\biggl\lfloor \left( \frac{ f \Phi^{-1}(1-\alpha/2) }{d} \right)^{2/(1+\gamma)} \biggr\rfloor, \overline{N}_0 \right\} 
	+ \frac{ \wh{\sigma}_{N_0}^2 \Phi^{-1}(1-\alpha/2)^2 }{ d^2 } + 1 \\
	& \le \frac{ \wh{\sigma}_{N_0}^2 \Phi^{-1}(1-\alpha/2)^2 }{ d^2 }   +
	\left(  \frac{ f \Phi^{-1}(1-\alpha/2) }{d} \right)^{2/(1+\gamma)} + \overline{N}_0 + 1.
	\end{align*}
	Combining the last estimate with (\ref{LowerBound}), we arrive at 
	\[
	\frac{ \wh{\sigma}_{N_0}^2 \Phi^{-1}(1-\alpha/2)^2 }{ d^2 } \le 
	\wh{N}_{opt}
	\le 
	\frac{ \wh{\sigma}_{N_0}^2 \Phi^{-1}(1-\alpha/2)^2 }{ d^2 }   
	+ \left( \frac{ f \Phi^{-1}(1-\alpha/2) }{d} \right)^{2/(1+\gamma)} + \overline{N}_0 + 1, 
	\]
	which implies, due to (\ref{Formula_N_opt_star}), 
	\begin{equation}
	\label{InequalityRatio}
	\frac{ \wh{\sigma}_{N_0}^2 }{ \sigma_\mu^2 } \le 
	\frac{ \wh{N}_{opt} }{ N_{opt}^* } \le
	\frac{ \wh{\sigma}_{N_0}^2 }{ \sigma_\mu^2 } 
	+ \frac{f^{2/(1+\gamma)}}{\sigma_\mu^2} d^{2\gamma/(1+\gamma)} \Phi^{-1}(1-\alpha/2)^{-2\gamma/(1+\gamma)}
	+ \frac{(\overline{N}_0+1)d^2}{\sigma_\mu^2 \Phi^{-1}(1-\alpha/2)^2}.
	\end{equation}
	We are led to
	\begin{equation}
	\label{InequalityRatio2}
	\left| \frac{ \wh{N}_{opt} }{ N_{opt}^* }  - 1 \right|
	\le \left| \frac{ \wh{\sigma}_{N_0}^2 }{ \sigma_\mu^2 } - 1 \right| 
	+ \frac{ f^{2/(1+\gamma)}}{\sigma_\mu^2} d^{2\gamma/(1+\gamma)} \Phi^{-1}(1-\alpha/2)^{-2\gamma/(1+\gamma)}
	+ \frac{ (\overline{N}_0+1)d^2 }{ \sigma_\mu^2 \Phi^{-1}(1-\alpha/2)^2 }.
	\end{equation}		
	Since $ \gamma > 0$ ,the second term is $ o_P(1) $, if $ 1-\alpha \to 1 $.
\end{proof}

\begin{proof}[Proof of Theorem~\ref{2ndEff}] Observing that $ N_0 \to \infty $ if and only if $ 1-\alpha \to 1 $ and $ O(N_0^{-1}) = O( \Phi^{-1}(1-\alpha/2)) $, the proof of Theorem~\ref{2ndEff1} carries over and provides the bound
	\[
	| E( \wh{N}_{opt}^*  ) -  N_{opt}^* | \le N_0 O( N_0^{-2}) + 1 = O(N_0^{-1}) + 1.
	\]
\end{proof}

\section{Application to a projection of high-dimensional data}
\label{Sec: HDData}

An interesting application is to study the construction of a fixed-width confidence interval for the mean projection of high-dimensional data. As an example, we may ask how for how long we need to observe the asset returns of stocks associated to a portfolio given by portfolio vector $ \vecw_n $,  in order to set up a $(1-\alpha)$-confidence interval for the mean portfolio return $ \vecw_n' \bfmu $ with a precision $d$. Such uncertainty quantification of a mean projection also arises when projecting a data vector to reduce dimensionality, calculating a projection $ \vecw_n'\vecX_n $ of a $p_n $-dimensional data vector $ \vecX_n $ being a common approach to handle multivariate data when the dimension of the observed vectors is large. Widely used methods are PCA, where one projects onto eigenvectors of the (estimated) covariance matrix of the data, sparse PCA yielding sparse projection vectors or LASSO regressions. For the latter approach, recall that a LASSO regression determines a sparse weighting vector, the regression coefficients, such that the associated projection of the regressors provides a good explanation of the response variable. 

Of particular interest is the situation that the dimension gets larger when the sample size increases, in order to mimic the case of large dimension when relying on asymptotics where the sample size increases. Then the weighting vectors depend on the sample size. We show that the general assumptions established in the previous section apply under mild uniform integrability conditions on $ \vecw_n'\vecX_n $, $ n \ge 1 $.

\subsection{Procedure}

Suppose we are given a series of random vectors of increasing dimension, such that at time instant $n$ we have at our disposal $n$ i.i.d. random vectors 
\[
	\vecX_{n1}, \dots, \vecX_{nn} \sim (\bfmu_n, \bfSigma_n ),
\]
of dimension $ p = p_n $, where $ p_n \to \infty $, as $ n \to \infty $ is allowed. Here the notation $ \bfxi_n  \sim (\bfmu_n, \bfSigma_n) $ means that the random vector $ \bfxi_n $ follows a distribution with mean vector $ \bfmu_n  $ and covariance matrix $ \bfSigma_n $. We are interested in an asymptotic fixed-width confidence interval for the projected mean vector,
\[
	\theta_n = \vecw_n' \bfmu_n,
\]
i.e. for a preassigned accuracy $ d > 0 $ and a given confidence level $ 1-\alpha $ we aim at finding a sample size $N$ such that the confidence interval $ [T_N - d, T_N + d ] $ as asymptotic confidence $ 1-\alpha $, 
\[
P( T_N - d \le  \vecw_N' \bfmu_N \le T_N +d  ) = 1-\alpha + o(1), 
\] 
as $ d \downarrow 0$. Here, for any (generic) sample size $n$, $ T_n $ is an estimator of $ \vecw_n' \bfmu $. For simplicity, we consider the unbiased estimator $T_n = \vecw_n' \overline{\vecX}_n $ where $ \overline{\vecX}_n = \frac{1}{n} \sum_{i=1}^n \vecX_{ni} $. Denote by $ T_n^* = (T_n - E(T_n)) / \sqrt{ \Var(T_n) }  $ the standardized version.

The proposed two-stage procedure is as in the previous section: At the first stage draw
\begin{equation}
N_0 = \max \left\{ \overline{N}_0, \left \lfloor \frac{\Phi^{-1}(1-\alpha/2) \wh{\sigma}_{in}  }{d} \right \rfloor + 1 \right\}.
\end{equation}
observations, where $ \overline{N}_0  $ is a given minimal sample size and $\wh{\sigma}_{in} $ an initial estimate of the standard deviation of the projections using indepdent pilot data, such as the three-observations-rule estimator. Next, we estimate the variance of the projections from the first-stage sample by $\wh{\sigma}_{N_0}^2 $ and calculate the final sample size
\begin{equation} 
\label{DefN1}
\wh{N}_{opt} = \max \left\{  N_0, \biggl\lfloor \frac{ \wh{\sigma}_{N_0}^2 \Phi^{-1}(1-\alpha/2)^2 }{d^2}  \biggr\rfloor + 1 \right\}.
\end{equation}

In practice, one considers a certain number of variables and so we assume that these variables are observable for the relevant sample sizes $ N_0 $ and $ \wh{N}_{opt} $ as well as that the projection vectors only have non-zero entries for those variables of interest. The mathematical framework allowing for an increasing dimension is used to justifiy the procedure when the number of variables resp. dimension is large compared to the sample sizes in use.

Let us assume that
\begin{equation}
\label{ConvergenceVarianceProj}
\lim_{n \to \infty} \vecw_n' \bfSigma_n \vecw_n = \sigma^2_{\vecw} > 0
\end{equation}
and for $ r \in \{ 1, 2 \} $ 
\begin{equation}
\label{UniformInt}
\sup_{n \ge 1 }  E( | \vecw_n'\vecX_{n1} |^r \eins( | \vecw_n'\vecX_{n1} |^k > c ) ) \to 0, \quad c \to \infty, \quad k = 1, \dots, r. 
\end{equation}

Assumption (\ref{ConvergenceVarianceProj}) is mild and rules out cases where the variance of the projection vanishes asymptotically. The uniform integrability required in (\ref{UniformInt}) is a crucial technical condition to ensure that the weak law of large numbers and the central limit theorem apply. In many cases the condition can be formulated in terms of the $ \vecX_n $: For that purpose, suppose additionally that the norms $ W_n = \| \vecw_n \|_2 $  are bounded by constant $ W < \infty $. Then, using the simple fact that $ | \vecw_n' \vecX_n | \le W \| \vecX_n \|_2 $, condition (\ref{UniformInt}) holds, if the $ \vecX_n $ satisfy 
\begin{equation}
\label{UniformInt_X}
  \sup_{n \ge 1 } E( \| \vecX_{n1} \|_2^r \eins( \| \vecX_{n1} \|_2^k > c ) ) \to 0, \quad c \to \infty, \quad k = 1, \dots, r. 
\end{equation}
A simple sufficient condition for (\ref{UniformInt_X}) is the moment condition
\begin{equation}
\label{Moment_X}
  \sup_{n \ge 1} E \| \vecX_n \|_2^{r+\delta} < \infty
\end{equation}
for some $ \delta > 0 $. 

We need to verify Assumptions (A) and (E). The verification of the RCLT is somewhat involved, since the projection statistic of interest is a {\em weighted} sum with weights depending on the sample size.

\begin{theorem} 
	\label{ThHD1}
	Suppose that (\ref{ConvergenceVarianceProj}) and (\ref{UniformInt}) hold. 
	If $ \tau_a $, $ a > 0 $,  is a sequence of integer-valued random variables with $ \tau_a / a \to k $, in probability, for some $ 0 < k < \infty $, then the statistic $ T_n = \vecw_n'\overline{\vecX}_n $ satisfies the random central limit theorem, i.e.
	\[
	T_{\tau_a} \stackrel{d}{\to} N(0,1),
	\]
	as $ a \to \infty $.
\end{theorem}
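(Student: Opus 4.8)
The plan is to reduce the claim to a Lindeberg--Feller central limit theorem for a row-wise i.i.d.\ triangular array and then to pass from a deterministic to the random index. The statement concerns the standardized statistic $ T_n^* = (T_n - E(T_n))/\sqrt{\Var(T_n)} $ defined above. Since $ \Var(T_n) = s_n^2/n $ with $ s_n^2 := \vecw_n'\bfSigma_n\vecw_n $, writing $ \eta_{ni} = \vecw_n'\vecX_{ni} - \theta_n $ (i.i.d.\ within row $n$, mean $0$, variance $ s_n^2 $) gives the exact identity $ T_n^* = S_n^* := (n s_n^2)^{-1/2}\sum_{i=1}^n \eta_{ni} $. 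By (\ref{ConvergenceVarianceProj}), $ s_n^2 \to \sigma_{\vecw}^2 > 0 $, and $ \tau_a/a \stackrel{P}{\to} k \in (0,\infty) $ forces $ \tau_a \to \infty $ in probability. It therefore suffices to establish the random-index limit $ S_{\tau_a}^* \stackrel{d}{\to} N(0,1) $.

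First I would prove the deterministic-index statement $ S_n^* \stackrel{d}{\to} N(0,1) $. Setting $ \zeta_{ni} = \eta_{ni}/(\sqrt{n}\,s_n) $ yields a triangular array with $ \sum_{i=1}^n \Var(\zeta_{ni}) = 1 $, so by the Lindeberg--Feller theorem it is enough to verify, for every $ \varepsilon > 0 $,
\[
  \sum_{i=1}^n E\!\left[\zeta_{ni}^2\,\eins(|\zeta_{ni}|>\varepsilon)\right]
  = \frac{1}{s_n^2}\,E\!\left[\eta_{n1}^2\,\eins\!\left(|\eta_{n1}|>\varepsilon\sqrt{n}\,s_n\right)\right] \longrightarrow 0 .
\]
Here the centering satisfies $ |\theta_n| \le E|\vecw_n'\vecX_{n1}| $, which is bounded because (\ref{UniformInt}) with $ r=1 $ forces $ \sup_n E|\vecw_n'\vecX_{n1}| < \infty $; together with (\ref{UniformInt}) for $ r=2 $ this gives uniform integrability of the family $ \{\eta_{n1}^2\}_{n\ge1} $. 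Since $ s_n^2 \to \sigma_{\vecw}^2 > 0 $ and $ \sqrt{n}\,s_n \to \infty $, the truncated second moment tends to $0$, the Lindeberg condition holds, and $ S_n^* \stackrel{d}{\to} N(0,1) $. By Polya's theorem the convergence is uniform, i.e.\ $ \gamma_n := \sup_{x\in\R}|P(S_n^*\le x)-\Phi(x)| \to 0 $.

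The main step, and the expected obstacle, is to transfer this to the random, possibly data-dependent index $ \tau_a $. The plan is to localize to the window $ J_a = \{ m : |m-\lfloor ak\rfloor| \le \varepsilon a \} $ (with $ 0 < \varepsilon < k $), in which $ \tau_a $ lies with probability tending to $1$ and whose indices all tend to infinity, so that $ \sup_{m\in J_a}\gamma_m \to 0 $ by the previous paragraph. It then remains to show an Anscombe-type uniform-continuity-in-probability estimate, namely that the standardized sums vary negligibly across $ J_a $; I would derive this from a Kolmogorov/Ottaviani maximal inequality applied to the centered partial sums, the increments being rendered uniformly negligible by the uniform integrability in (\ref{UniformInt}). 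Combining this fluctuation bound with $ \sup_{m\in J_a}\gamma_m \to 0 $ yields $ S_{\tau_a}^* \stackrel{d}{\to} N(0,1) $, which is the assertion, and verifies the random central limit theorem (Assumption~(A)) for the projection statistic.

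The genuine difficulty is that the summands carry the sample-size-dependent weights $ \vecw_n $, so the comparison across indices entangles a change in the number of terms with a change in the weighting; this is precisely what separates the present situation from the classical i.i.d.\ random central limit theorem recalled in Assumption~(A), and it is the point at which the maximal-inequality step must be handled with care. The uniform integrability condition (\ref{UniformInt}) is the crucial ingredient throughout, as it is exactly what makes both the Lindeberg step and the Anscombe increment control hold uniformly in $n$.
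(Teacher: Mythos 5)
Your proposal follows essentially the same route as the paper: verify the Lindeberg condition for the row-wise i.i.d.\ array $ \xi_{ni} = (\vecw_n'\vecX_{ni} - \vecw_n'\bfmu_n)/\sqrt{\vecw_n'\bfSigma_n\vecw_n} $ using (\ref{UniformInt}) together with the boundedness of $ |\vecw_n'\bfmu_n| $ and (\ref{ConvergenceVarianceProj}), and then transfer to the random index via an Anscombe-type u.c.i.p.\ estimate from Kolmogorov's maximal inequality. The difficulty you flag at the end --- that the summands carry the row-dependent weights $ \vecw_n $ --- is precisely the step the paper handles by applying the maximal inequality within each row and bounding $ \Var\bigl( \sum_{i=1}^{n\delta} \xi_{ni} \bigr) = O(\lfloor n\delta \rfloor) $, so your sketch matches the published argument in both structure and level of detail.
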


Define the estimator
\begin{equation}
\label{DefEstimatior}
\wh{\sigma}_n^2 = \frac{1}{n} \sum_{i=1}^n  \left[ \vecw_n' \vecX_{ni} - \vecw_n' \overline{\vecX}_{n}  \right]^2.
\end{equation}
The following result verifies that Assumptions (E) holds for this estimator under weak technical conditions.

\begin{theorem}
	\label{ThHD2}
	If (\ref{ConvergenceVarianceProj}), then
	\begin{itemize}
		\item[(i)] $ \wh{\sigma}_n^2 - \vecw_n' \bfSigma_n \vecw_n \stackrel{P}{\to} 0 $ and $ \wh{\sigma}_n^2 \stackrel{P}{\to}  \sigma^2_{\vecw} $,
		as $ n \to \infty $.
		\item[(ii)] $ E( \wh{\sigma}_n^2 ) - \vecw_n' \bfSigma_n \vecw_n  \to 0 $ and $ E( \wh{\sigma}_n^2 ) \to  \sigma_\vecw^2  $, as $ n \to \infty $.
	\end{itemize}
\end{theorem}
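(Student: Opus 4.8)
\textbf{Proof plan for Theorem~\ref{ThHD2}.}
Write $ U_{ni} = \vecw_n' \vecX_{ni} $ for $ 1 \le i \le n $. For each fixed $n$ the variables $ U_{n1}, \dots, U_{nn} $ are i.i.d.\ with mean $ m_n = \vecw_n' \bfmu_n $ and variance $ \sigma_n^2 = \vecw_n' \bfSigma_n \vecw_n $, and $ \overline{U}_n = \vecw_n' \overline{\vecX}_n = \frac{1}{n} \sum_{i=1}^n U_{ni} $, so that $ \wh{\sigma}_n^2 = \frac{1}{n} \sum_{i=1}^n ( U_{ni} - \overline{U}_n )^2 $ depends on the data only through the scalar triangular array $ \{ U_{ni} \} $. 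This reduction is the key organizing observation: although each $ \vecX_{ni} $ is high-dimensional, the estimator is a functional of the one-dimensional projections alone, so it suffices to prove a law of large numbers for a row-wise i.i.d.\ triangular array whose row distribution changes with $n$.

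Part (ii) I would settle first, as it requires nothing beyond finiteness of the second moments. Writing $ \wh{\sigma}_n^2 = \frac{n-1}{n} S_n^2 $ with $ S_n^2 = \frac{1}{n-1} \sum_{i=1}^n ( U_{ni} - \overline{U}_n )^2 $ the usual unbiased sample variance of the $ U_{ni} $, one has $ E( S_n^2 ) = \sigma_n^2 $ and hence $ E( \wh{\sigma}_n^2 ) = \frac{n-1}{n} \sigma_n^2 $. Consequently $ E( \wh{\sigma}_n^2 ) - \vecw_n' \bfSigma_n \vecw_n = - \sigma_n^2 / n \to 0 $, since $ \sigma_n^2 \to \sigma_\vecw^2 < \infty $ by (\ref{ConvergenceVarianceProj}), and the same convergence gives $ E( \wh{\sigma}_n^2 ) \to \sigma_\vecw^2 $, proving both assertions of (ii).

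For part (i) the plan is to expand $ \wh{\sigma}_n^2 = \frac{1}{n} \sum_{i=1}^n U_{ni}^2 - \overline{U}_n^2 $ and to handle the two terms with a weak law of large numbers for uniformly integrable triangular arrays: if $ \{ Z_{ni} \} $ is row-wise i.i.d.\ and $ \{ Z_{n1} \} $ is uniformly integrable, then $ \frac{1}{n} \sum_{i=1}^n Z_{ni} - E( Z_{n1} ) \stackrel{P}{\to} 0 $. This follows from the standard truncation argument, truncating at a level $c$, bounding the variance of the centred truncated average by $ c^2 / n $, and controlling the tail uniformly in $n$ by uniform integrability. I would apply it twice. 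Taking $ Z_{ni} = U_{ni}^2 $, the case $ r = 2 $ of (\ref{UniformInt}) yields uniform integrability of $ \{ U_{n1}^2 \} $ and hence $ \frac{1}{n} \sum_{i=1}^n U_{ni}^2 - ( \sigma_n^2 + m_n^2 ) \stackrel{P}{\to} 0 $; it also forces $ \sup_n E( U_{n1}^2 ) < \infty $, whence $ \sup_n m_n^2 \le \sup_n E( U_{n1}^2 ) < \infty $ by Jensen. Taking $ Z_{ni} = U_{ni} $, the case $ r = 1 $ of (\ref{UniformInt}) gives uniform integrability of $ \{ U_{n1} \} $ and thus $ \overline{U}_n - m_n \stackrel{P}{\to} 0 $; boundedness of $ m_n $ then upgrades this, via $ \overline{U}_n^2 = m_n^2 + 2 m_n ( \overline{U}_n - m_n ) + ( \overline{U}_n - m_n )^2 $, to $ \overline{U}_n^2 = m_n^2 + o_P(1) $. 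Subtracting the two expansions, the $ m_n^2 $ terms cancel and $ \wh{\sigma}_n^2 - \sigma_n^2 \stackrel{P}{\to} 0 $, i.e.\ $ \wh{\sigma}_n^2 - \vecw_n' \bfSigma_n \vecw_n \stackrel{P}{\to} 0 $; combining with the deterministic convergence $ \sigma_n^2 \to \sigma_\vecw^2 $ from (\ref{ConvergenceVarianceProj}) gives $ \wh{\sigma}_n^2 \stackrel{P}{\to} \sigma_\vecw^2 $.

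The only genuine obstacle is the triangular-array weak law itself: the ordinary Khintchine law is unavailable because the distribution of $ U_{ni} $ varies with $n$, and this is precisely the role of the uniform integrability hypothesis (\ref{UniformInt}). The remaining work is bookkeeping, namely verifying that (\ref{UniformInt}) for $ k = 1 $ and $ k = 2 $ does deliver uniform integrability of $ \{ U_{n1} \} $ and of $ \{ U_{n1}^2 \} $ (these amount to the same family via $ \{ | U_{n1} | > c \} = \{ U_{n1}^2 > c^2 \} $), and confirming that boundedness of $ m_n $ absorbs the cross term $ 2 m_n ( \overline{U}_n - m_n ) $ into $ o_P(1) $.
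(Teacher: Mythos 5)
Your proposal is correct and follows essentially the same route as the paper: both reduce to the scalar row-wise i.i.d.\ array $\vecw_n'\vecX_{ni}$, expand $\wh{\sigma}_n^2$ into the second sample moment minus the squared sample mean, and invoke a weak law of large numbers for triangular arrays under the uniform integrability condition (\ref{UniformInt}) (the paper cites Gut's array WLLN where you sketch the truncation argument), together with the boundedness of $\vecw_n'\bfmu_n$ and $\vecw_n'\bfSigma_n\vecw_n$ that (\ref{UniformInt}) supplies. Your treatment of part (ii) via the exact identity $E(\wh{\sigma}_n^2)=\frac{n-1}{n}\,\vecw_n'\bfSigma_n\vecw_n$ is a small but clean simplification of the paper's term-by-term expectation computation and reaches the same conclusion.
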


The above two theorems imply that the proposed two-stage procedure is asymptotically consistent as well as first order and second order efficient. 

\subsection{Proofs}

\subsubsection{Preliminaries on the random central limit theorem}

According to the general theoretical results of the previous section, it suffices to establish Assumptions (A), the validity of the RCLT, and (E) for the statistic of interest, i.e. the projected data. As a preparation, let us briefly review Anscombe's RCLT and sufficient conditions. Consider a sequence $ X_1, X_2, \dots $ of i.i.d. random variables with mean $ \mu $ and finite variance $ \sigma^2 > 0 $ and a family  $ \tau_a $, $ a > 0 $, of integer-valued random variables, often but not necessarily stopping times, with $ \tau_a/ a \to c $, as $ a \to \infty $, for a finite constant $c$. The RCLT asserts that
\[
S_{\tau_a}^* = \frac{ S_{\tau_a} - \tau_a \mu }{ \sqrt{\tau_a} \sigma }  \stackrel{d}{\to} N( 0, 1 ),
\]
as $ a \to \infty $, where for $ n \in \N $
\[
S_n = \sum_{i=1}^n X_i, \qquad S_n^* = \frac{S_n - n \mu}{\sqrt{n} \sigma}.
\]
This means, the sample size $n$ can be replaced by $ \tau_a $.
The basic idea why this holds is as follows, cf. \cite{GoshMukhoSen1997}: The approximation $ \tau_a \approx a c $ suggests that $ S_{\tau_a}^* $ should have the same limiting distribution as $ S_{n_a}^* $ where $ n_a = \lfloor a c \rfloor $. The  CLT gives $ S_{n_a}^* \to N(0,1) $, as $ a \to \infty $, in distribution. Now, for any $ \varepsilon > 0 $ and $ \delta > 0 $, 
\begin{equation}
\label{RCLTwhy}
P( | S_{\tau_a}^* - S_{n_a}^* | \ge \varepsilon ) \le P\left( \left| \frac{\tau_a}{n_a} - 1 \right| > \delta  \right) + 
P\left( \max_{|n-n_a| \le \delta n_a} | S_n^* - S_{n_a}^* |  \ge \varepsilon \right),
\end{equation}
since on the event $ | \tau_a / n_a - 1 | \le \delta $ we have $ | S_{\tau_a}^* - S_{n_a}^* | \le \max_{|n-n_a| \le \delta n_a} | S_n^* - S_{n_a}^* | $. The second term on the right-hand side of (\ref{RCLTwhy}) can be made arbitrarily small, if the sequence $ \{ S_n^* : n \ge 1 \} $ is {\em uniformly continuous in probability} (u.c.i.p). In general, a sequence $ \{ Y_n : n \ge 1 \} $ is called u.c.i.p., if  for any $ \varepsilon > 0 $ there exists some $ \delta > 0 $, so that for large enough $n$
\[
P\left( \max_{0 \le k \le n \delta} | Y_{n+k} - Y_n |  \ge \varepsilon \right) < \varepsilon.
\]
The u.c.i.p property of $ S_n^* $ can be shown using Kolmogorov's maximal inequality, since $ S_n^* $ is a sum of i.i.d. random variables.

\subsection{Proofs}

Let us first show the sufficiency of (\ref{Moment_X}) for (\ref{UniformInt_X}) observe that
\[
E( \| \vecX_{n1} \|_2^r \eins( \| \vecX_{n1} \|_2^k > c ) ) = E\left( \| \vecX_{n1} \|_2^{r+\delta} \| \vecX_n \|_2^{-\delta} \eins( \| \vecX_n \|_2^{-\delta} < c^{1/qk} ) \right)
\le c^{1/qk} E \| \vecX_n \|_2^{r+\delta}.
\]

As a preparation for the following proofs, observe that we may choose $c$ large enough to ensure that
\begin{equation}
  E( \vecw_n'\vecX_{n1} )^2 \le c^2 + \sup_{n \ge 1} E( (\vecw_n'\vecX_{n1})^2 \eins( | \vecw_n'\vecX_{n1} | > c ) ) \le 2 c^2
\end{equation}
holds for all $n \ge 1$. Therefore the sequence of second moments of $ \vecw_n'\vecX_{n1} $ is bounded. This implies
\begin{align}
\label{VarianceBounded}
  \vecw_n'\bfSigma_n \vecw_n &= \Var( \vecw_n'\vecX_{n1} ) = O(1), \\
\label{MeansBounded}
  E | \vecw_n' \vecX_{n1} | & = O(1). 
\end{align}

For technical reasons, we first show Theorem~\ref{ThHD2}.

\begin{proof}[Proof of Theorem~\ref{ThHD2}]
Observe that the random variables $ Z_{ni} = ( \vecw_n' \vecX_{ni} )^2 $, $ i = 1, \dots, n $, are independent and nonnegative with mean $ E(Z_{n1} ) = \vecw_n' \vecM_n \vecw_n $, where $ \vecM_{n1} = E( \vecX_{n1} \vecX_{n1}' ) $. Hence, by the strong law of large numbers for arrays due to \cite{Gut1992} under the uniform integrability condition (e) given there, 
	\[
	   \sup_{n \ge 1} E( (\vecw_n' \vecX_{n1} )^2 \eins( (\vecw_n'\vecX_{n1} )^2 > c ) ) \to 0, \qquad c \to \infty,
	\]
	we obtain
	\begin{equation}
	\label{ConvSecondMoments}
	\frac{1}{n} \sum_{i=1}^n [ (\vecw_n'\vecX_{n1} )^2 - E  (\vecw_n'\vecX_{n1} )^2  ] \stackrel{P}{\to} 0,
	\end{equation}
	as $ n \to \infty $. Similarly, by (\ref{UniformInt}), we also obtain the convergence of the sample moment
	\[
	\frac{1}{n} \sum_{i=1}^n \vecw_n' \vecX_{ni} - \vecw_n' \bfmu \stackrel{P}{\to} 0,
	\]
	as $ n \to \infty $. Further
	\[
	E \left( \frac{1}{n} \sum_{i=1}^n  (\vecw_n'\vecX_{ni} )^2  \right) = E( \vecw_n'\vecX_{n1} \vecX_{n1}' \vecw_n ) =
	\vecw_n' \vecM_{n1} \vecw_n,
	\]
	as $ n \to \infty $,  and
	\begin{align*}
	E ( \vecw_n'\overline{\vecX}_n  )^2 & = \vecw_n' E( \overline{\vecX}_n \overline{\vecX}_n ' ) \vecw_n \\
	& = 
	\frac{1}{n^2} \sum_{i=1}^n \sum_{j=1}^n \vecw_n' E( \vecX_{ni} \vecX_{nj}' ) \vecw_n \\
	& = \frac{1}{n^2} \left[ n \vecw_n' \bfSigma_n\vecw_n + \sum_{1\le i\not=j \le n } \vecw_n' \bfmu_n \bfmu_n' \vecw_n \right],
	\end{align*}
	such that
	\begin{equation}
	\label{ConvSquares}
	E ( \vecw_n'\overline{\vecX}_n  )^2 - ( \vecw_n'\bfmu_n )^2 \to 0,
	\end{equation}
	as $ n \to \infty $, by (\ref{VarianceBounded}). Using the decomposition
	$ \Var( \vecw_n' \vecX_{n1} ) = \vecw_n' \vecM_{n1} \vecw_n -  (\vecw_n'\bfmu_n)^2 $ and rearranging terms,  (\ref{ConvSecondMoments}) and (\ref{ConvSquares}) imply
	\begin{align*}
	\left| E( \wh{\sigma}_n^2  ) - \Var( \vecw_n' \vecX_{n1} ) \right| &= 
	\left| E( \wh{\sigma}_n^2 ) - [ \vecw_n' \vecM_{n1} \vecw_n - (\vecw_n'\bfmu_n)^2 ] \right| \\ & \quad \le \left| E \left( \frac{1}{n} \sum_{i=1}^n   (\vecw_n'\vecX_{ni} )^2 \right) - \vecw_n' \vecM_{n1} \vecw_n \right| + \left|  E ( \vecw_n'\overline{\vecX}_n  )^2 - ( \vecw_n'\bfmu_n )^2   \right| \\
	& \quad \to 0,
	\end{align*}
	as $ n \to \infty $, which verifies (ii).
\end{proof}

\begin{proof}[Proof of Theorem~\ref{ThHD1}] Let $ S_n = \sum_{i=1}^n \vecw_n' \vecX_{ni} $ and observe that the standardized version is given by
	\[
	S_n^* = \frac{ S_n - n \vecw_n' \bfmu }{ \sqrt{n \vecw_n' \bfSigma_n \vecw_n} } = \frac{1}{\sqrt{n}} \sum_{i=1}^n  \xi_{ni}
	\]
	where $ \xi_{ni} = \frac{\vecw_n'\vecX_{ni} - \vecw_n'\bfmu}{ \sqrt{\vecw_n' \bfSigma_n \vecw_n}  } $, $ i = 1, \dots, n $. The random variables $ \xi_{n1}, \dots, \xi_{nn} $ are i.i.d. with $ E( \xi_{n1} ) = 0 $ and $ \Var( \xi_{n1} ) = 1 $ for each $n$. Let us verify that condition (\ref{UniformInt}) provides the Lindeberg condition for the array $ \{ \xi_{ni} : 1 \le i \le n, n \ge 1 \} $. Put
	\[
	  s_n^2 = \Var( S_n^* ) = \frac{1}{n} \sum_{i=1}^n E( \xi_{ni}^2 ).
	\]
	Let $ \delta > 0 $.	First observe that, since the $ \xi_{ni} $ are row-wise identically distributed, the Lindeberg condition collapses to
	\begin{align*}
	  L_n( \delta) &= \frac{1}{s_n^2} \sum_{i=1}^n E\left( \xi_{ni}^2 \eins( | \xi_{ni} | > \delta s_n \sqrt{n} )  \right) \\
	  & = \frac{1}{E(\xi_{n1})^2} E\left( \xi_{n1}^2 \eins( | \xi_{n1} | > \delta s_n \sqrt{n} )  \right).
	\end{align*}
	The latter expression can be rewritten as
	\[
	  \frac{1}{E((\vecw_n'(\vecX_{n1}-\bfmu_n))^2 ) } E\left( (\vecw_n'(\vecX_{n1} - \bfmu_n) )^2 \eins( | \vecw_n'(\vecX_{n1} - \bfmu_n) | > \delta s_n \sqrt{n}  )  \right)
	\]
	The first factor is $ O(1) $ by assumption (\ref{ConvergenceVarianceProj}). Therefore it suffices to show that the second factor, denoted by $ K_n(\delta) $ in what follows, converges to $0$ if $ n \to \infty $. Observe that
	\[
	  K_n(\delta) = R_{n1} - 2 R_{n2} + R_{n3},
	\]
	where
	\begin{align*}
	  R_{n1} &= E\left( (\vecw_n'\vecX_{n1} )^2 \eins( | \vecw_n'(\vecX_{n1}-\bfmu_n) | > \delta s_n \sqrt{n} )  \right) , \\
	  R_{n2} &= E\left( \vecw_n'\vecX_{n1} \vecw_n'\bfmu_n \eins( |\vecw_n'(\vecX_{n1}-\bfmu_n)| > \delta s_n \sqrt{n} ) ) \right), \\
	  R_{n3} & = E\left( (\vecw_n'\bfmu_{n1})^2 \eins( |\vecw_n'(\vecX_{n1}-\bfmu_n)| > \delta s_n \sqrt{n} ) )  \right).
	\end{align*}
	First, observe that
	\begin{align*}
	  |R_{n1}| &\le E \left( (\vecw_n'\vecX_{n1})^2 \eins( | \vecw_n'\vecX_{n1} | + | \vecw_n'\bfmu_n| > \delta s_n \sqrt{n} )  \right) \\
	  & = E \left( (\vecw_n'\vecX_{n1})^2 \eins( | \vecw_n'\vecX_{n1} | > \delta s_n \sqrt{n}  - | \vecw_n'\bfmu_n| )  \right) \\
	  & \to 0,
	\end{align*}
	as $ n \to \infty $, by (\ref{UniformInt}), since $ | \vecw_n' \bfmu_n | = O(1) $, see (\ref{MeansBounded}). Similarly,
	\begin{align*}
	  |R_{n2}| & \le | \vecw_n'\bfmu_n | E\left( |\vecw_n'\vecX_{n1}| \eins( | \vecw_n'\vecX_{n1} | > \delta s_n \sqrt{n}  - | \vecw_n'\bfmu_n| )  \right)  \\
	  & \to 0,
	\end{align*}
	as $ n \to \infty $, again by (\ref{UniformInt}). Lastly, by the Cauchy-Schwarz inequality and (\ref{MeansBounded}),
	\begin{align*}
	  | R_{n3} | & \le ( \vecw_n'\bfmu_n)^2 P( | \vecw_n'(\vecX_{n1}-\bfmu_n) | > \delta s_n \sqrt{n} ) \\
	  & =  O(1) \frac{E( \vecw_n'(\vecX_{n1}-\bfmu_n))^2}{ \delta^2 s_n^2 n } \\
	  & = O\left( \frac{\vecw_n'\bfSigma_n\vecw_n}{\delta^2 n} \right) \\
	  & = o(1),
	\end{align*}
	as $ n \to \infty $, using (\ref{VarianceBounded}) and $ s_n^2 = 1 $.
	Hence, by virtue of the CLT for row-wise i.i.d. arrrays under the Lindeberg condition, see e.g. \cite{Durrett2019}, $S_n^* $ converges in distribution to a standard normal distribution. To verify that the RCLT holds, we need to check the u.c.i.p. condition, see the discussion in the next section for more details. But since the Kolmogorov maximal inequality also applies to the rows  of the row-wise i.i.d. array $ \xi_{n1}, \dots, \xi_{nn} $ depending on $n$, we have
	\[
	P\left( \max_{0 \le k \le n \delta} | S_{n+k}^* - S_n^*| > \frac{\varepsilon}{2} \right) \le 4 (n \varepsilon^2)^{-1} \Var\left( \sum_{i=1}^{n\delta} \xi_{ni} \right) \le 4 \delta / \varepsilon^2,
	\]
	since $\Var\left( \sum_{i=1}^{n\delta} \xi_{ni} \right)  = O( \lfloor n \delta \rfloor ) $. 
	Therefore, the proof of the random central limit theorem can be completed as in \cite[Theorem~2.7.2]{GoshMukhoSen1997}.
\end{proof}

\section{Common mean estimation}
\label{Sec: CommonMean}

\subsection{A review of common mean estimation}

We study the construction of a fixed-width confidence interval for the common mean of two samples based on two-stage sequential estimation with equal sample sizes. For motivation consider the following example: A good is produced using two machines operating at the same speed but with possibly different accuracies. Both machines have to be used to satisfy demand and interest focuses on estimation of the common mean and providing a fixed-width confidence interval for it. The same situation arises when combining data from two laboratories where usually one laboratory has better equipment or expertise than the other lab and therefore provides the measurements with smaller uncertainty, such that the assumption of ordered variances is justified. In such settings, one should base inference on two random samples with equal sample sizes, and the goal is to determine the minmal number of observations required to estimate the common mean with preassigned accuracy.

Let us briefly review some related facts and results about common mean estimation. The literature focuses on parametric settings, mainly the case of two independent Gaussian samples. It is, nevertheless, worth recalling the estimators studied under normality. Indeed, our aim is to study a class of common mean estimators which covers many of the estimators proposed in the literature as special cases. 

It is well known that for Gaussian samples the minimal variance unbiased estimator in the case of known variances is a convex combination of the sample means $ \overline{X}_i = N^{-1} \sum_{j=1}^{N} X_{ij} $, $ i = 1, 2 $, with weights depending on the unknown $ \sigma_1^2 $ and $ \sigma_2^2 $. Hence, it is natural to study weighted means of the sample means where the weights depend on the sample variances $ S_i^2 =  N^{-1} \sum_{j=1}^{N} (X_{ij} - \overline{X}_i )^2 $, $ i = 1, 2 $, as we shall do here, or equivalently depending on their unbiased versions denoted by $ \wt{S}_i^2 $, $ i = 1, 2 $, throughout the paper. The canoncial approach is to estimate the optimal weights by substituting the unkown variances by their unbiased estimators. First proposed and studied by \cite{GraybillDeal1959}, this leads to the {\em Graybill-Deal (GD)} estimator
\[
\wh{\mu}^{(GD)} = \frac{ \wt{S}_2^2 }{  \wt{S}_2^2 +  \wt{S}_1^2 } 
\overline{X}_1 + \frac{  \wt{S}_1^2 }{ \wt{S}_2^2 + \wt{S}_1^2 } 
\overline{X}_2,
\]
which does assume an order constraint. Observe that the GD estimator is a convex combination of the sample means with random weights given by smooth functions the unbiased variance estimators. When there is an order constraint on the variances, $ \sigma_1^2 \le \sigma_2^2 $, the GD estimator can be improved and several proposal have been made and investigated, see the discussion in \cite{StelandChang2019}. For example,
 \cite{Nair1982} showed that the GD estimator can be improved by using random convex weights whose definition (as smooth functions of the sample variances) depends on the ordering of variance estimates: If $ \wt{S}_1^2 > \wt{S}_2^2 $, then
Nair's proposal switches from the formula for $ \wh{\mu}^{(GD)} $ to $ \frac{1}{2} \overline{X}_1 + \frac{1}{2} \overline{X}_2 $. This estimator stochastically dominates the GD estimator, as shown by \cite{Elfessi1992}. A further proposal is to switch to the formula $ \frac{\wt{S}_1^2}{\wt{S}_1^2+\wt{S}_2^2} \overline{X}_1 + \frac{\wt{S}_2^2}{\wt{S}_1^2+\wt{S}_2^2} \overline{X}_2 $ instead, see
\cite{Elfessi1992}. As shown in \cite{ChangEtAl2012}, such an estimator can be further improved in terms of stochastic dominance by further modifying the weights, see also \cite{ChangShinozaki2015} for a study using a different criterion.

The above findings motivate to study the general class of common mean estimators given by
\begin{equation}
	\label{ConsideredClass}
	\wh{\mu}_N( \gamma ) = \gamma_N \overline{X}_1 + (1-\gamma_N) \overline{X}_2,
\end{equation}
where the weight $ \gamma_N $ is random and of the form
\begin{equation}
\label{SpecificWeights}
\gamma_N = \gamma( \wt{S}_1^2, \wt{S}_2^2, \overline{X}_1, \overline{X}_2 )
= \left\{
\begin{array}{ll}
\gamma^{\le}( \wt{S}_1^2, \wt{S}_2^2, \overline{X}_1, \overline{X}_2 ), \qquad \wt{S}_1^2 \le \wt{S}_2^2, \\
\gamma^>( \wt{S}_1^2, \wt{S}_2^2, \overline{X}_1, \overline{X}_2 ), \qquad 
\wt{S}_1^2 > \wt{S}_2^2. \\
\end{array}  
\right.
\end{equation}
Here the functions $ \gamma^\le $ and $ \gamma^> $ are three times continuously differentiable with bounded derivatives.

Although the existing literature suggests to use such convex combinations with random weights due to their performance in terms of variance, squared error losses as in \cite{CohenSackrowtz1974} or stochastic dominance, the established methodology to use those estimators for inference is rather limited and is mainly confined to Gaussian samples. Even estimation of the variance of these estimators is non-trivial and not well studied.

In the sequel, we relax the restrictive condition of Gaussian measurements and consider estimators of the the above class for indendent samples following a distribution with finite $ 12$th moment. Under this condition,  \cite{StelandChang2019} have shown shown that  jackknife variance estimators are consistent and asymptotically normal for common mean estimators from the class (\ref{ConsideredClass}). For the special case of equal sample sizes as studied here a simplified jackknifing scheme was proposed. 
Especially, it follows from \cite{StelandChang2019} that 
\[
  \sqrt{N}( \wh{\mu}_N(\gamma) - \mu ) \stackrel{d}{\to} N(0, \sigma_\mu^2(\gamma) ),
\]
with asymptotic variance
\[
  \sigma_\mu^2 =  2 [ \gamma^2 \sigma_1^2 + (1-\gamma)^2 \sigma_2^2 ]
\]
and $ \gamma = \gamma( \sigma_1^2 , \sigma_2^2, \mu, \mu ) $. When combined with the validity of the RCLT established in the next section, that result allows the construction of a fixed-width confidence interval for the common mean using a two-stage sampling approach, according to the results of the previous section. Therefore, we establish the RCLT for the above class of common mean estimators. 

\subsection{The random central limit theorem for common mean estimators}

For the class of common mean estimators under investigation the u.c.i.p. property can not be shown by a simple application of Kolmogorov's inequality. The situation is more involved and requires special treatment, since the weights are random and depend on the sample size. 

The following three main results on common mean estimators with random weights assert that the RCLT holds. It turns out that the RCLT is directly related to the tightness of the weighting sequence. This is formulated in the first theorem: For {\em arbitrary random} weights $ \gamma_N $ which are bounded and u.c.i.p. the RCLT holds.

\begin{theorem}
	\label{RCLTGeneral}
	Let $ X_{ij} $, $ j = 1, \dots, n_i $, $ i = 1,2 $, be i.i.d. samples of  random variables with finiite second moment. 
	Suppose that $ \{ \gamma_n : n \ge 1 \} $ is a sequence of bounded random weights satisfying the u.c.i.p. condition. Let $ \tau_a $, $ a > 0 $, be a sequence of integer-valued random variables with $ \tau_a/ a \to c $, as $ a \to \infty $. If $ \sqrt{N}( \wh{\mu}_N(\gamma) - \mu ) / \sigma_\mu $  satisfies the CLT, then 
	$ \sqrt{N}( \wh{\mu}_N(\gamma) - \mu ) / \sigma_\mu $ also satisfies the RCLT, i.e.
	\[
	\sqrt{\tau_a}( \wh{\mu}_{\tau_a}( \gamma ) - \mu ) / \sigma_\mu \stackrel{d}{\to} N(0,1), 
	\]
	as $ a \to \infty $.
\end{theorem}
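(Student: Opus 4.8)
The plan is to reduce the random central limit theorem to the uniform-continuity-in-probability (u.c.i.p.) property of the standardized statistic and then to invoke the Anscombe-type argument already recorded in the preliminaries. Writing $ Z_N = \sqrt{N}(\wh{\mu}_N(\gamma)-\mu)/\sigma_\mu $, the decomposition (\ref{RCLTwhy}) shows that, once $ Z_N $ satisfies the CLT (which is assumed) and is u.c.i.p., the substitution of the deterministic index by $ \tau_a $ with $ \tau_a/a \to c $ is justified exactly as in \cite[Theorem~2.7.2]{GoshMukhoSen1997}. Hence the entire burden is to prove that $ \{ Z_N \} $ is u.c.i.p.

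To this end I would exploit the linear structure of $ \wh{\mu}_N(\gamma) $. Set $ A_N = \sqrt{N}(\overline{X}_1 - \mu) $ and $ B_N = \sqrt{N}(\overline{X}_2 - \mu) $, so that $ \sigma_\mu Z_N = \gamma_N A_N + (1-\gamma_N) B_N $. A direct rearrangement of the increment gives
\[
\sigma_\mu ( Z_{n+k} - Z_n ) = \gamma_{n+k}(A_{n+k}-A_n) + (1-\gamma_{n+k})(B_{n+k}-B_n) + (\gamma_{n+k}-\gamma_n)(A_n - B_n).
\]
The first two summands are harmless: the weights are bounded by assumption, and $ A_N, B_N $ are standardized sums of i.i.d. summands (up to the constants $ \sigma_1, \sigma_2 $), hence u.c.i.p. by Kolmogorov's maximal inequality exactly as in the preliminaries and in the proof of Theorem~\ref{ThHD1}. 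Thus $ \max_{0\le k\le n\delta}|\gamma_{n+k}(A_{n+k}-A_n)| $ and the analogous $ B $-term can be made arbitrarily small in probability by shrinking $ \delta $.

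The delicate term is the third one, $ (\gamma_{n+k}-\gamma_n)(A_n - B_n) $, since $ A_n - B_n = \sqrt{n}(\overline{X}_1 - \overline{X}_2) $ is tight but does not vanish. Here the assumed u.c.i.p. of the weight sequence $ \{ \gamma_n \} $ is precisely what is needed: it forces $ \max_{0\le k\le n\delta}|\gamma_{n+k}-\gamma_n| $ to be uniformly small in probability, while the marginal CLTs for $ \overline{X}_1 $ and $ \overline{X}_2 $ (valid because each sample is i.i.d. with finite second moment) make $ A_n - B_n = O_P(1) $. The clean way to combine the two is to fix, by tightness, a constant $ M_\varepsilon $ with $ P(|A_n - B_n| > M_\varepsilon) < \varepsilon $ uniformly in large $ n $, and then estimate
\begin{align*}
P\Bigl( \max_{0\le k\le n\delta}|\gamma_{n+k}-\gamma_n|\,|A_n - B_n| \ge \varepsilon \Bigr)
&\le P(|A_n - B_n| > M_\varepsilon) \\
&\quad + P\Bigl( \max_{0\le k\le n\delta}|\gamma_{n+k}-\gamma_n| \ge \varepsilon/M_\varepsilon \Bigr),
\end{align*}
where the second probability is controlled by the u.c.i.p. of $ \{ \gamma_n \} $ after shrinking $ \delta $. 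Assembling the three bounds yields the u.c.i.p. of $ \{ Z_N \} $, and the theorem follows. The main obstacle, and the reason random weights genuinely complicate matters, is exactly this coupling in the third term: the increments of the estimator are not those of an i.i.d. partial-sum sequence, so Kolmogorov's inequality alone does not suffice, and one must trade off the vanishing fluctuation of the weights against the mere tightness of $ A_n - B_n $.
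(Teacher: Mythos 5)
Your proposal is correct and follows essentially the same route as the paper: reduce the RCLT to the u.c.i.p. property of the standardized estimator via the Anscombe argument, split the increment into the partial-sum fluctuation terms (handled by boundedness of the weights and Kolmogorov's maximal inequality) and the weight-fluctuation terms (handled by tightness of the standardized sums together with the assumed u.c.i.p. of $\{\gamma_n\}$). The only cosmetic difference is that you merge the two cross terms into the single expression $(\gamma_{n+k}-\gamma_n)(A_n-B_n)$, whereas the paper treats $(\gamma_{n+k}-\gamma_n)S_n^{(1)*}$ and $(\gamma_n-\gamma_{n+k})S_n^{(2)*}$ separately; the estimates are identical.
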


The next result shows that the  weights $ \gamma_N $ considered by the class of common mean estimators (\ref{ConsideredClass}) satisfy the u.c.i.p. property. 

\begin{theorem} 
	\label{WCM_ucip}
	The weights $ \gamma_N $ (\ref{SpecificWeights}) of the common mean estimator have the u.c.i.p. property if $ E (X_{i1}^4) < \infty $, $ i = 1, 2$.
\end{theorem}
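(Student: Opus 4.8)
The plan is to reduce the u.c.i.p.\ property of the random weights $\{\gamma_n\}$ to the u.c.i.p.\ property of the four underlying statistics $\overline{X}_{1,n}, \overline{X}_{2,n}, \wt{S}_{1,n}^2, \wt{S}_{2,n}^2$ on which $\gamma_n$ depends, and then to verify the latter by maximal-inequality arguments. The bridge is the observation that the weight map $\gamma$ is globally Lipschitz as a function of $(\wt S_1^2, \wt S_2^2, \overline X_1, \overline X_2)$. On each of the two regions $\{ s_1 \le s_2 \}$ and $\{ s_1 > s_2 \}$ this is immediate from the assumed boundedness of the first derivatives of $\gamma^\le$ and $\gamma^>$ together with the mean value theorem. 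Since for the estimators in the class (\ref{ConsideredClass}) the two pieces coincide on the interface $\{ s_1 = s_2 \}$ (both equal $1/2$ for the Graybill--Deal, Nair and Elfessi--Pal weights), $\gamma$ is continuous there; decomposing any segment joining a point of one region to a point of the other at its unique crossing of the hyperplane $\{ s_1 = s_2 \}$ then upgrades the two local bounds to a single global Lipschitz constant $L = \max( \| \nabla \gamma^\le \|_\infty, \| \nabla \gamma^> \|_\infty )$.

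Granting this, I would write
\[
\max_{0 \le k \le n \delta} | \gamma_{n+k} - \gamma_n | \le L \sum_{i=1}^2 \left( \max_{0 \le k \le n\delta} | \overline X_{i,n+k} - \overline X_{i,n} | + \max_{0 \le k \le n\delta} | \wt S_{i,n+k}^2 - \wt S_{i,n}^2 | \right),
\]
so that it suffices to establish u.c.i.p.\ for each sample mean and each sample variance separately, the full claim following by a union bound over the four terms.

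For a sample mean I would use the decomposition
\[
\overline X_{n+k} - \overline X_n = \frac{1}{n+k} \sum_{j=n+1}^{n+k} (X_j - \mu) - \frac{k}{n+k} ( \overline X_n - \mu ).
\]
The maximal oscillation of the first term over $0 \le k \le n\delta$ is controlled by Kolmogorov's maximal inequality applied to the i.i.d.\ mean-zero increments $X_j - \mu$, which yields a bound of order $\delta \sigma^2 /( n \eta^2 ) \to 0$; the second term is at most $\delta\, | \overline X_n - \mu |$, which is small for small $\delta$ and large $n$ because $\overline X_n \to \mu$. This gives u.c.i.p.\ of $\overline X_n$ under a finite second moment. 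For a sample variance I would use $\wt S_n^2 = \frac{n}{n-1} ( \frac1n \sum_{j=1}^n X_j^2 - \overline X_n^2 )$ and apply the same oscillation bound to the averages $\frac1n \sum_{j=1}^n X_j^2$ of the i.i.d.\ variables $X_j^2$; this is exactly where the hypothesis $E( X_{i1}^4 ) < \infty$ enters, since Kolmogorov's inequality for the partial sums of $X_j^2 - E( X_j^2 )$ requires $\Var( X_j^2 ) = E( X_j^4 ) - E( X_j^2 )^2 < \infty$. The oscillation of $\overline X_n^2$ is handled through $| \overline X_{n+k}^2 - \overline X_n^2 | \le ( | \overline X_{n+k} | + | \overline X_n | ) | \overline X_{n+k} - \overline X_n |$ together with the mean's u.c.i.p.\ and the boundedness of $\overline X_m$ near $\mu$, while the prefactor $n/(n-1) \to 1$ contributes negligibly.

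The main obstacle is the Lipschitz step across the switching boundary: because $\gamma$ is defined by two different smooth formulas separated by the random interface $\{ \wt S_1^2 = \wt S_2^2 \}$, and because this interface is crossed infinitely often precisely in the critical equal-variance case $\sigma_1^2 = \sigma_2^2$, one cannot apply the mean value theorem to a single smooth branch over the window $0 \le k \le n\delta$. The crossing argument, and hence the entire reduction, rests on the continuity of $\gamma$ at the interface; once that is secured, the remaining oscillation estimates are routine maximal-inequality computations.
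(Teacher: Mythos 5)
Your argument follows essentially the same route as the paper's proof: reduce the u.c.i.p.\ property of $\gamma_n$ to that of $\overline{X}_{n,i}$ and $\wt{S}_{n,i}^2$ via a Lipschitz bound on the weight function, obtain u.c.i.p.\ of the means and of the second empirical moments $n^{-1}\sum_j X_{ij}^2$ from Kolmogorov's maximal inequality (the latter being exactly where $E(X_{i1}^4)<\infty$ enters), and control $\wt{S}_{n,i}^2$ through the decomposition into the second moment minus the squared mean with a high-probability bound on $|\overline{X}_{n+k,i}+\overline{X}_{n,i}|$. The one point where you are more careful than the paper is the Lipschitz step: the paper simply declares $\max\{\|\partial_j\gamma^{\le}\|_\infty,\|\partial_j\gamma^{>}\|_\infty\}$ to be a global Lipschitz constant, whereas you correctly note that gluing the two branches requires their agreement on the interface $\{\wt{S}_1^2=\wt{S}_2^2\}$ --- without which $\gamma$ is not even continuous, an issue that bites precisely when $\sigma_1^2=\sigma_2^2$ and the interface is crossed infinitely often --- and your segment-crossing argument, valid for the Graybill--Deal, Nair and Elfessi--Pal weights where both branches coincide on the diagonal, closes that gap.
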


Consequently, we arrive at the following RCLT for the class of common mean estimators of interest, which verifies Assumption (A) for them.

\begin{theorem}
	\label{RCLTCommonMean}
	Let $ X_{ij} $, $ j = 1, \dots, n_i $, $ i = 1,2 $, be i.i.d. samples of  random variables with $ E | X_{i1} |^{12} < \infty $. Let $ \tau_a $, $ a > 0 $, be a family of integer-valued random variables with $ \tau_a/ a \to c $, as $ a \to \infty $. Then the common mean estimator $ \sqrt{N}( \wh{\mu}_N(\gamma) - \mu ) / \sigma_\mu $ satisfies the RCLT, i.e.
	\[
	\sqrt{\tau_a}( \wh{\mu}_{\tau_a}( \gamma ) - \mu ) / \sigma_\mu \stackrel{d}{\to} N(0,1), 
	\]
	as $ a \to \infty $, such that Assumption (A) holds true.
\end{theorem}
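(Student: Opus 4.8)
The plan is to assemble the three preceding results into a single statement. Theorem~\ref{RCLTGeneral} reduces the random central limit theorem for the common mean estimator to three ingredients: the ordinary CLT for the standardized statistic $\sqrt{N}(\wh{\mu}_N(\gamma)-\mu)/\sigma_\mu$, boundedness of the weight sequence $\{\gamma_n\}$, and the u.c.i.p.\ property of that sequence. My strategy is therefore to verify each of these under the hypothesis $E|X_{i1}|^{12} < \infty$ and then invoke Theorem~\ref{RCLTGeneral} directly for the given family $\tau_a$.

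First I would establish the CLT. This is exactly the asymptotic normality quoted from \cite{StelandChang2019}, which under a finite twelfth moment gives $\sqrt{N}(\wh{\mu}_N(\gamma)-\mu) \stackrel{d}{\to} N(0, \sigma_\mu^2)$ with $\sigma_\mu^2 = 2[\gamma^2\sigma_1^2 + (1-\gamma)^2\sigma_2^2]$ and $\gamma = \gamma(\sigma_1^2, \sigma_2^2, \mu, \mu)$; dividing by the (positive) limiting standard deviation $\sigma_\mu$ yields the standardized CLT required by Theorem~\ref{RCLTGeneral}. The twelfth-moment assumption of the present theorem is imposed precisely so that this cited result applies, and it is the binding moment requirement.

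Next, for boundedness I would note that $\gamma_N$ in (\ref{SpecificWeights}) is a convex-combination weight, so $\gamma_N \in [0,1]$ and the sequence is trivially bounded. For the u.c.i.p.\ property I would invoke Theorem~\ref{WCM_ucip}, whose hypothesis $E(X_{i1}^4) < \infty$ is implied by the assumed finite twelfth moment and hence holds automatically.

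With the CLT, boundedness and u.c.i.p.\ all verified, Theorem~\ref{RCLTGeneral} applies to $\tau_a$ with $\tau_a/a \to c$ and delivers $\sqrt{\tau_a}(\wh{\mu}_{\tau_a}(\gamma)-\mu)/\sigma_\mu \stackrel{d}{\to} N(0,1)$, which is precisely Assumption (A). I do not anticipate a genuine obstacle in this final step, since the analytic substance resides in Theorems~\ref{RCLTGeneral} and \ref{WCM_ucip}; the only point demanding care is the bookkeeping of moment conditions, namely checking that the twelfth moment needed for the cited CLT dominates the fourth moment needed for u.c.i.p., so that the single assumption $E|X_{i1}|^{12} < \infty$ simultaneously covers both.
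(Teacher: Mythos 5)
Your proposal is correct and follows exactly the paper's route: the paper's own proof simply cites the CLT for $\wh{\mu}_N(\gamma)$ from the twelfth-moment result of Chang and Steland and then combines Theorems~\ref{RCLTGeneral} and \ref{WCM_ucip}. You merely make explicit the bookkeeping the paper leaves implicit (boundedness of the convex weights and that the twelfth moment dominates the fourth moment needed for the u.c.i.p.\ lemma), which is a faithful elaboration rather than a different argument.
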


\subsection{Proofs}

\begin{proof}[Proof of Theorem~\ref{RCLTGeneral}]
	We have the representation
	\[
	\wh{\mu}_N( \gamma ) - \mu = \frac{1}{N} \left\{  \gamma_N S_N^{(1)} + (1-\gamma_N) S_N^{(2)}  \right\},
	\]
	where $ S_k^{(i)} = \sum_{j=1}^k (X_{ij}-\mu) $, $ k \ge 1 $, for $ i = 1, 2 $. Hence,
	\[
	\sqrt{N}( \wh{\mu}_N( \gamma ) - \mu ) = \gamma_N S_N^{(1)*} + (1-\gamma_N) S_N^{(2)*} 
	\]
	We may assume $ \mu = 0 $ and $ \sigma_\mu = 1 $. Then $ S_n^* = n^{-1/2} S_n $ and for the sequence of the standardized versions $ \wh{\mu}_n^*( \gamma ) = \sqrt{n} \wh{\mu}_n $ we obtain
	\begin{align*}
	\wh{\mu}_{n+k}^*( \gamma )  - \wh{\mu}_n^*( \gamma ) &= \gamma_{n+k} S_{n+k}^{(1)*} - \gamma_n S_n^{(1)*} + (1-\gamma_{n+k}) S_{n+k}^{(2)*} - (1-\gamma_n) S_n^{(2)*} \\
	& = \gamma_{n+k}( S_{n+k}^{(1)*} - S_{n}^{(1)*} ) + ( \gamma_{n+k} - \gamma_n) S_n^{(1)*}  \\
	& \quad + (1-\gamma_{n+k}) ( S_{n+k}^{(2)*} - S_{n}^{(2)*} ) + ( \gamma_{n} - \gamma_{n+k}) S_n^{(2)*}
	\end{align*}
	By boundedness of the weights, we may assume the bound is $1$, we may find $ \delta > 0 $ such that 
	\[
	P\left( \max_{0 \le k \le n \delta} |  \gamma_{n+k}( S_{n+k}^{(1)*} - S_{n}^{(1)*} ) | \ge \varepsilon   \right) 
	\le P\left( \max_{0 \le k \le n \delta} | S_{n+k}^{(1)*} - S_{n}^{(1)*} |  \ge \varepsilon/4   \right) < \varepsilon/4
	\]
	for all $ n \ge 1 $. For the third term the same argument applies. The remaining two terms require the u.c.i.p. property of the weights and are treated as follows: We may find a constant $ C > 8 $ with
	$P(|  S_n^{(1)*} | > C ) < \varepsilon/8 $, $ n \ge 1 $. Now choose $ \delta > 0 $ so that
	\[
	P\left( \max_{0 \le k \le n \delta} | \gamma_{n+k} - \gamma_n | \ge \varepsilon/C \right) < \varepsilon/C \le \varepsilon/8.
	\]
	This leads to
	\begin{align*}
	P\left( \max_{0 \le k \le n \delta} |  \gamma_{n+k} - \gamma_n | | S_{n}^{(1)*} |  \ge \varepsilon   \right) 
	& \le 
	P\left( \max_{0 \le k \le n \delta} C |  \gamma_{n+k} - \gamma_n | \ge \varepsilon,  |  S_n^{(1)*} | \le C  \right) + P(|  S_n^{(1)*} | > C ) \\
	& \le P\left( \max_{0 \le k \le n \delta} |  \gamma_{n+k} - \gamma_n | \ge \varepsilon/C  \right) + \frac{\varepsilon}{8} \\
	&\le \frac{\varepsilon}{4}.
	\end{align*}
	The remaining fourth term is treated analogously. It follows that the sequence of standardized estimates $  \wh{\mu}_n^* $, $ n \ge 1 $, is u.c.i.p., i.e.
	\[
	P\left( \max_{0 \le k \le n \delta} | \wh{\mu}_{n+k}^*( \gamma )  - \wh{\mu}_n^*( \gamma ) | \ge \varepsilon \right) < \varepsilon, \qquad n \ge 1.
	\]
	Since $  \wh{\mu}_n^* \stackrel{d}{\to} N(0,1) $, as $ n \to \infty $, the RCLT follows, cf. (\ref{RCLTwhy}).
\end{proof}

\begin{proof}[Proof of Theorem~\ref{WCM_ucip}]
	In what follows we need to make the sample sizes explicit in the notation for the estimators and therefore write
	$ \wt{S}_{n,i} $, $ \overline{X}_{n,i} $ for $ i = 1,2 $. Of course, $ \sqrt{n} \overline{X}_{n,i} $, $n \ge 1 $, and therefore $  \overline{X}_{n,i} $, $ n \ge 1 $, is u.c.i.p.. Similarly, $ V_{n,i} = n^{-1} \sum_{j=1}^n X_{ij}^2 $, $ n \ge 1 $, is u.c.i.p.. Next observe that
	\begin{align*}
	\wt{S}_{n+k,i}^2 - \wt{S}_{n,i}^2 & = \frac{1}{n+k} \sum_{j=1}^{n+k} X_{ij}^2 - \frac{1}{n} \sum_{j=1}^n X_{ij}^2 - ( \overline{X}_{n+k,i} )^2 +  ( \overline{X}_{n,i} )^2 \\
	& = V_{n+k,i} - V_{n,i} - (  \overline{X}_{n+k,i}  -  \overline{X}_{n,i}   ) ( \overline{X}_{n+k,i}  +
	\overline{X}_{n,i} )  
	\end{align*}
	We may find $ C > 4 $ with $ P( | \overline{X}_{n+k,i}  +
	\overline{X}_{n,i} | >  C ) < \varepsilon/4 $.  Now choose $ \delta > 0 $ such that
	\[
	P\left( \max_{0 \le k \le n \delta} | V_{n+k,i} - V_{n,i} | \ge \frac{\varepsilon}{2} \right) < \frac{\varepsilon}{2}, \qquad i = 1, 2, \dots, \quad n \ge 1,
	\]
	as well as
	\[
	P\left( \max_{0 \le k \le n \delta} |  \overline{X}_{n+k,i}  -  \overline{X}_{n,i}   | \ge \frac{\varepsilon}{C} \right) < \frac{\varepsilon}{C}, \qquad i = 1, 2,  \dots, \quad n \ge 1.
	\]
	Then,
	\begin{align*}
	P\left( \max_{0 \le k \le n \delta} | \wt{S}_{n+k,i}^2 - \wt{S}_{n,i}^2 |  \ge \varepsilon \right)
	& \le P\left( \max_{0 \le k \le n \delta} | V_{n+k,i} - V_{n,i} | \ge \frac{\varepsilon}{2} \right) \\
	& \qquad + P\left( \max_{0 \le k \le n \delta} |  \overline{X}_{n+k,i}  -  \overline{X}_{n,i}   | | \overline{X}_{n+k,i}  +
	\overline{X}_{n,i} | \ge \frac{\varepsilon}{2} \right) \\
	& \le \frac{\varepsilon}{2} + P\left( \max_{0 \le k \le n \delta} |  \overline{X}_{n+k,i}  -  \overline{X}_{n,i}   | \ge \frac{\varepsilon}{C} \right) +  P( | \overline{X}_{n+k,i}  +
	\overline{X}_{n,i} | >  C ) \\
	& < \frac{\varepsilon}{2} +  \frac{\varepsilon}{4}  + \frac{\varepsilon}{4},
	\end{align*}
	which verifies the u.c.i.p. property for $ \wt{S}_{n,i} $, $ n \ge 1 $. For a function $ f $ of several variables write $ \partial_j f $ for the partial derivative with respect to the $j$th argument. Observe that on $ \{ \wt{S}_{n,1} \le \wt{S}_{n,2} \} $ a Lipschitz constant of $ \gamma $ with respect to the $j$th variable is given by $ \| \partial_j \gamma^{\le} \|_\infty $, and on $ \{ \wt{S}_{n,1} > \wt{S}_{n,2} \} $ we have the Lipschitz constant $ \| \partial_j \gamma^{>} \|_\infty $, $ j = 1, \dots, 4 $. Therefore, a Lipschitz constant on $ \Omega $ is given by $ \max \{ \| \partial_j \gamma^{\le} \|_\infty, \| \partial_j \gamma^{>} \|_\infty \} $. We obtain the  following Lipschitz property:
	\begin{align*}
	|\gamma_{n+k} - \gamma_n| & = |\gamma(  \wt{S}_{n+k,1}^2, \wt{S}_{n+k,2}^2, \overline{X}_{n+k,1}, \overline{X}_{n+k,2} ) -
	\gamma(  \wt{S}_{n,1}^2, \wt{S}_{n,2}^2, \overline{X}_{n,1}, \overline{X}_{n,2} ) | \\
	& \le \max\{ \| \partial_1 \gamma^{\le}  \|_\infty, \| \partial_1 \gamma^> \|_\infty \} | | \wt{S}_{n+k,1}^2 - \wt{S}_{n,1}^2| \\
	& \quad + \max\{ \| \partial_2 \gamma^{\le}  \|_\infty, \| \partial_2 \gamma^> \|_\infty \} |  \wt{S}_{n+k,2}^2 - \wt{S}_{n,2}^2|  \\
	& \quad + \max\{ \| \partial_3 \gamma^{\le}  \|_\infty, \| \partial_3 \gamma^> \|_\infty \} | \overline{X}_{n+k,1} - \overline{X}_{n,1}|  \\
	& \quad + \max\{ \| \partial_4 \gamma^{\le}  \|_\infty, \| \partial_4 \gamma^> \|_\infty \} | \overline{X}_{n+k,2} - \overline{X}_{n,2}|. 
	\end{align*}
	Consequently, there exists $ 0 < L < \infty$ with
	\[
	|\gamma_{n+k} - \gamma_n| \le L \left( | \wt{S}_{n+k,1}^2 - \wt{S}_{n,1}^2| + | \wt{S}_{n+k,2}^2 - \wt{S}_{n,2}^2|  +  | \overline{X}_{n+k,1} - \overline{X}_{n,1}|  +  | \overline{X}_{n+k,2} - \overline{X}_{n,2}|  \right),
	\]
	and from this fact the u.c.i.p. property follows easily, since
	\begin{align*}
	P\left( \max_{0 \le k \le n \delta} |\gamma_{n+k} - \gamma_n|  \ge \varepsilon \right)
	&\le \sum_{i=1,2} P\left( \max_{0 \le k \le n \delta} | \wt{S}_{n+k,i}^2 - \wt{S}_{n,i}^2 |  \ge \frac{\varepsilon}{4L} \right) \\
	& \qquad +  \sum_{i=1,2}  P\left( \max_{0 \le k \le n \delta} | \overline{X}_{n+k,i} - \overline{X}_{n,i} |  \ge \frac{\varepsilon}{4L} \right)
	\end{align*}
	and we may find $ \delta > 0 $ such that each term is smaller than $ \varepsilon/4 $.
\end{proof}

\begin{proof}[Proof of Theorem~\ref{RCLTCommonMean}]
	The CLT for $ \wh{\mu}_N(\gamma) $ has been shown under the stated assumptions in \cite{ChoSteland2016}.
	Therefore the results follows from Theorems~\ref{RCLTGeneral} and \ref{WCM_ucip}.
\end{proof}

\section{Simulations and Data Example}
\label{Sec: Simulations}

\subsection{One-sample setting}

The aim of the simulations is to investigate the accuracy of the proposed two-stage procedure when applied to non-normal data. Especially, it is of interest to examine whether the proposed {\em high-accuracy} asymptotic frameworks works. 

For simplicity, we used the arithmetic mean to estimate the mean $ \mu $. I.i.d. data were simulated following the model
\[
  X = \mu +  \epsilon
\]
with $ \mu = 10 $ and zero mean error terms $ \epsilon $ distributed according to a standard normal distribution (model 1), a $ t(5) $-distribution (model 2) showing heavier tails and a $ U(-2.5, 2.5) $ distribution (model 3). The confidence level was chosen as $ 95\% $ and $ 99\% $. Further, the accuracy parameter $d$ was selected from the values $ \{ 0.3, 0.2, 0.1, 0.05 \} $. 

The first-stage  sample size was calculated using the three-observations-rule and a minimal sample size of $ \overline{N}_0 $ equal to $ 15 $ or $ 30 $. To calculate $ \wh{N}_{opt} $ the asymptotic variance of estimator was estimated  by $ \wh{S}_{N_0}^2 = \frac{1}{N_0-1} \sum_{i=1}^{N_0} (X_i - \overline{X}_{N_0} )^2 $ using the first-stage sample.

Table~\ref{simtab1} shows the results. It can be seen that the accuracy of the procedure is very good, both in terms of the coverage probability and in terms of over- or undershooting. The coverage probability is only slightly smaller than the nominal value. This effect is more pronounced for heavier tails, whereas shorter tails somewhat compensate this effect. The over- or undershooting is quite moderate, even for a minimal sample size of $ \overline{N}_0 = 15 $. When the minimal sample size is $ 30 $ instead of $15$, then the over- or undershooting is further reduced.

Fixing a large value for $d$, say $ d = 0.3 $, and comparing the results for the confidence levels $ 90\% $ and $ 99\% $ one can observe that the high-accuracy asymptotics works very well for cases under investigation: A high confidence level ensures even for large, fixed $d$ convincing coverage. 

\begin{table}[ht]
	\centering
	\begin{tabular}{ccccccccc}
		\hline
		& & & \multicolumn{3}{c}{$\overline{N}_0 = 15$} & \multicolumn{3}{c}{$\overline{N}_0 = 30$} \\
		$1-\alpha$ & Model & $d$ & $p$ & $E(\wh{N}_{opt})$ & $ E(\wh{N}_{opt})-N_{opt}^*$ & $p$ & $E(\wh{N}_{opt})$ & $ E(\wh{N}_{opt})-N_{opt}^*$  \\ 
		\hline
		0.95 & 1 & 0.30 & 0.94 & 45.29 & 0.29 & 0.95 & 45.41 & 0.41 \\ 
		0.95 & 1 & 0.20 & 0.93 & 98.35 & -0.65 & 0.94 & 98.59 & -0.41 \\ 
		0.95 & 1 & 0.10 & 0.94 & 388.54 & 1.54 & 0.94 & 387.13 & 0.13 \\ 
		0.95 & 1 & 0.05 & 0.94 & 1540.40 & 1.40 & 0.94 & 1536.66 & -2.34 \\ 
		0.95 & 2 & 0.30 & 0.93 & 73.77 & -0.23 & 0.94 & 73.66 & -0.34 \\ 
		0.95 & 2 & 0.20 & 0.92 & 161.75 & -1.25 & 0.93 & 162.36 & -0.64 \\ 
		0.95 & 2 & 0.10 & 0.92 & 645.93 & 2.93 & 0.93 & 645.01 & 2.01 \\ 
		0.95 & 2 & 0.05 & 0.93 & 2565.71 & 2.71 & 0.93 & 2565.50 & 2.50 \\ 
		0.95 & 3 & 0.30 & 0.94 & 91.22 & 0.22 & 0.95 & 91.52 & 0.52 \\ 
		0.95 & 3 & 0.20 & 0.94 & 202.96 & -0.04 & 0.94 & 202.63 & -0.37 \\ 
		0.95 & 3 & 0.10 & 0.95 & 802.01 & -0.99 & 0.95 & 801.95 & -1.05 \\ 
		0.95 & 3 & 0.05 & 0.95 & 3208.29 & 4.29 & 0.95 & 3203.94 & -0.06 \\ 
		0.99 & 1 & 0.30 & 0.98 & 76.24 & 0.24 & 0.99 & 76.04 & 0.04 \\ 
		0.99 & 1 & 0.20 & 0.98 & 168.48 & 0.48 & 0.99 & 168.52 & 0.52 \\ 
		0.99 & 1 & 0.10 & 0.98 & 665.94 & -0.06 & 0.99 & 666.43 & 0.43 \\ 
		0.99 & 1 & 0.05 & 0.99 & 2666.11 & 10.11 & 0.99 & 2657.20 & 1.20 \\ 
		0.99 & 2 & 0.30 & 0.97 & 125.28 & 0.28 & 0.98 & 125.49 & 0.49 \\ 
		0.99 & 2 & 0.20 & 0.97 & 281.14 & 2.14 & 0.98 & 280.46 & 1.46 \\ 
		0.99 & 2 & 0.10 & 0.98 & 1113.17 & 5.17 & 0.98 & 1108.43 & 0.43 \\ 
		0.99 & 2 & 0.05 & 0.98 & 4399.46 & -26.54 & 0.98 & 4423.36 & -2.64 \\ 
		0.99 & 3 & 0.30 & 0.98 & 155.93 & -0.07 & 0.99 & 156.22 & 0.22 \\ 
		0.99 & 3 & 0.20 & 0.99 & 346.53 & -1.47 & 0.99 & 348.09 & 0.09 \\ 
		0.99 & 3 & 0.10 & 0.99 & 1384.75 & -0.25 & 0.99 & 1383.22 & -1.78 \\ 
		0.99 & 3 & 0.05 & 0.99 & 5534.17 & 2.17 & 0.99 & 5522.13 & -9.87 \\ 
		\hline
	\end{tabular}
	\caption{One-sample setting: Fixed-width interval for the mean: Simulated coverage probabilies ($p$), expected sample sizes and over- or undershooting for the three-observations first-stage rule and $ \overline{N}_0 = 15 $. The left columns show the results for a minimal sample size of $15$, the right part of the table for a mimimum of $30$ observations.}
\label{simtab1}
\end{table}
 
\subsection{Common mean estimation}

In order to investigate the statistical accuracy of the proposed two-stage procedure to construct a fixed width confidence interval for the common mean, a simulation study was conducted. Data for the $i$th sample was simulated according to the model
\[
X_i = \mu_0 + \sigma_i \epsilon,
\]
with $ \mu_0 = 10 $, $ \sigma_1 = 1 $, $ \sigma_2 \in \{ 1, 1/2 \} $ and noise terms $ \epsilon \sim N(0,1) $ (model 1),
$ \epsilon \sim t(5) $ (model 2) to study the effect of heavier tails and $ \epsilon \sim U(-2.5, 2.5 ) $ (model 3) as a distribution with short tails. The confidence level was chosen as $ 95 \% $ and $ 99 \% $. Lastly, the precision parameter $d$ was chosen as $ 0.3, 0.2, 0.1 $ and $ 0.05 $. 

We used at least $ \overline{N}_0 = 10 $ observations for each sample, i.e $ 20 $ in total. From the first-stage sample of size $ N_0 $, i.e. using the three-observation rule, the asymptotic variance of the GD common mean estimator was estimated using the canoncial estimator $ \wh{\sigma}_\mu^2 = 2 [ \gamma_N^2 \wt{S}_1^2+ (1-\gamma_N)^2 \wt{S}_2^2 ] $ with the GD weights given by $ \gamma_N = \frac{\wt{S}_2^2}{\wt{S}_2^2 + \wt{S}_1^2} $, in order to calculate then $ \wh{N}_{opt} $. The results are provided in Table~\ref{simtab2}. One can observe that the approach works well, although in general the true coverage probabilities are somewhat lower than nominal. 

\begin{table}[ht]
	\centering
	\begin{tabular}{ccccccccc}
		\hline
		$1-\alpha$ & Model  & & \multicolumn{3}{c}{$\sigma_1=\sigma_2=1$} &  \multicolumn{3}{c}{$\sigma_1=1, \sigma_2=1/2$}  \\
		& & $d$ & $p$ & $E(\wh{N}_{opt})$ & $E(\wh{N}_{opt}) - N_{opt}^* $ & $p$ & $E(\wh{N}_{opt})$ & $E(\wh{N}_{opt}) - N_{opt}^* $ \\ 
		\hline
		0.95 & 1 & 0.30 & 0.90 & 19.73 & -1.61 & 0.92 & 9.43 & 0.89 \\ 
0.95 & 1 & 0.20 & 0.92 & 44.47 & -3.55 & 0.91 & 19.00 & -0.21 \\ 
0.95 & 1 & 0.10 & 0.93 & 182.28 & -9.79 & 0.93 & 74.61 & -2.22 \\ 
0.95 & 1 & 0.05 & 0.94 & 746.39 & -21.90 & 0.94 & 301.10 & -6.22 \\ 
0.95 & 2 & 0.30 & 0.90 & 29.61 & -5.96 & 0.92 & 13.46 & -0.77 \\ 
0.95 & 2 & 0.20 & 0.91 & 68.28 & -11.75 & 0.92 & 28.76 & -3.25 \\ 
0.95 & 2 & 0.10 & 0.92 & 287.46 & -32.66 & 0.92 & 117.83 & -10.22 \\ 
0.95 & 2 & 0.05 & 0.93 & 1201.86 & -78.63 & 0.93 & 483.77 & -28.42 \\ 
0.95 & 3 & 0.30 & 0.92 & 43.50 & -0.96 & 0.91 & 18.44 & 0.65 \\ 
0.95 & 3 & 0.20 & 0.94 & 97.90 & -2.14 & 0.92 & 40.14 & 0.13 \\ 
0.95 & 3 & 0.10 & 0.94 & 394.37 & -5.78 & 0.94 & 159.16 & -0.90 \\ 
0.95 & 3 & 0.05 & 0.95 & 1588.93 & -11.68 & 0.95 & 636.32 & -3.93 \\ \hline
0.99 & 1 & 0.30 & 0.97 & 34.13 & -2.73 & 0.97 & 14.90 & 0.15 \\ 
0.99 & 1 & 0.20 & 0.98 & 77.41 & -5.53 & 0.97 & 32.28 & -0.89 \\ 
0.99 & 1 & 0.10 & 0.98 & 318.08 & -13.66 & 0.98 & 128.86 & -3.83 \\ 
0.99 & 1 & 0.05 & 0.99 & 1297.50 & -29.48 & 0.98 & 521.04 & -9.75 \\ 
0.99 & 2 & 0.30 & 0.97 & 51.93 & -9.51 & 0.97 & 22.24 & -2.34 \\ 
0.99 & 2 & 0.20 & 0.97 & 120.35 & -17.87 & 0.97 & 49.94 & -5.35 \\ 
0.99 & 2 & 0.10 & 0.98 & 507.36 & -45.55 & 0.98 & 206.32 & -14.85 \\ 
0.99 & 2 & 0.05 & 0.98 & 2097.25 & -114.38 & 0.98 & 844.02 & -40.63 \\ 
0.99 & 3 & 0.30 & 0.98 & 75.10 & -1.70 & 0.97 & 30.98 & 0.26 \\ 
0.99 & 3 & 0.20 & 0.99 & 170.02 & -2.76 & 0.98 & 69.01 & -0.10 \\ 
0.99 & 3 & 0.10 & 0.99 & 683.24 & -7.89 & 0.99 & 275.08 & -1.37 \\ 
0.99 & 3 & 0.05 & 0.99 & 2748.73 & -15.81 & 0.99 & 1101.35 & -4.47 \\ 
		\hline \\
	\end{tabular}
	\caption{Fixed-width interval for the common mean: Simulated coverage probabilies ($p$) and expected sample sizes for the three-observations first-stage rule and $ \overline{N}_0 = 10 $.}
	\label{simtab2}
\end{table}

\subsection{Data example}

In chip manufacturing, the width of the cut out chips is a critical quantity and the machines, which operate with different accuracies, have to be calibrated well, in order to meet the specifications. Consequently, quality samples taken from different machines have a common mean but different variances. For quality control purposes, it is of great interest to be in a position to obtain fixed width confidence intervals, in order to report the chip's width at a specified uncertainty level.  

To design, analyze and assess a fixed width confidence interval, we have data from two cutting machines at our disposal. Those measurements are non-normal, as confirmed by the Shapiro-Wilk test, and the sample autocorrelation functions are in good agreement with the i.i.d. assumption, as can be seen from Figure~\ref{ACFplots}. For the problem at hand, a precision of $ 0.01 $ (i.e. $ d = 0.005 $) for the mean chip width at a confidence level of $ 99\% $ was selected. The precision parameter $d$ is rougly equal to one eighth of the measurement's standard deviation. 

\begin{figure}
	\begin{center}
		\includegraphics[width=10cm]{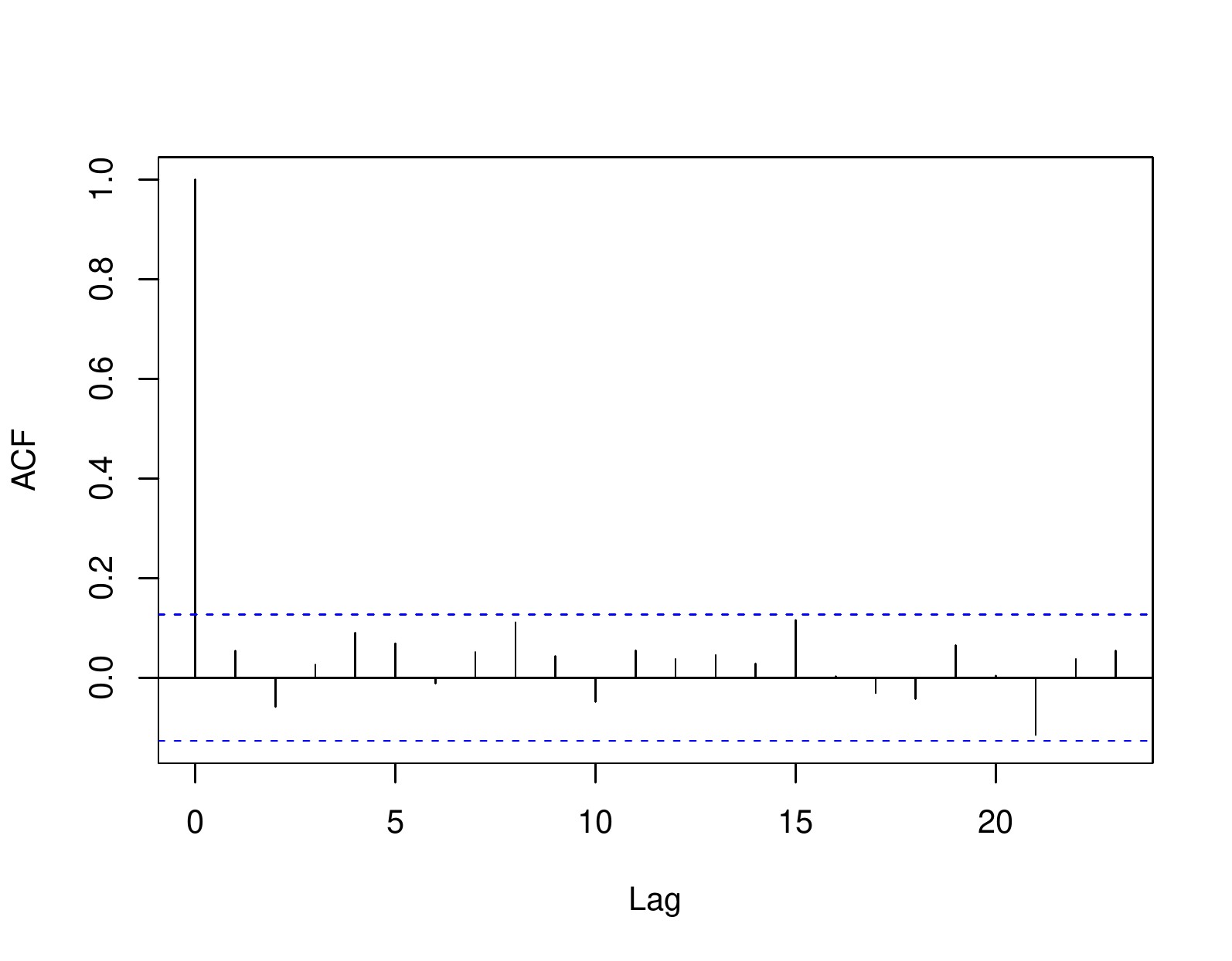}
		\includegraphics[width=10cm]{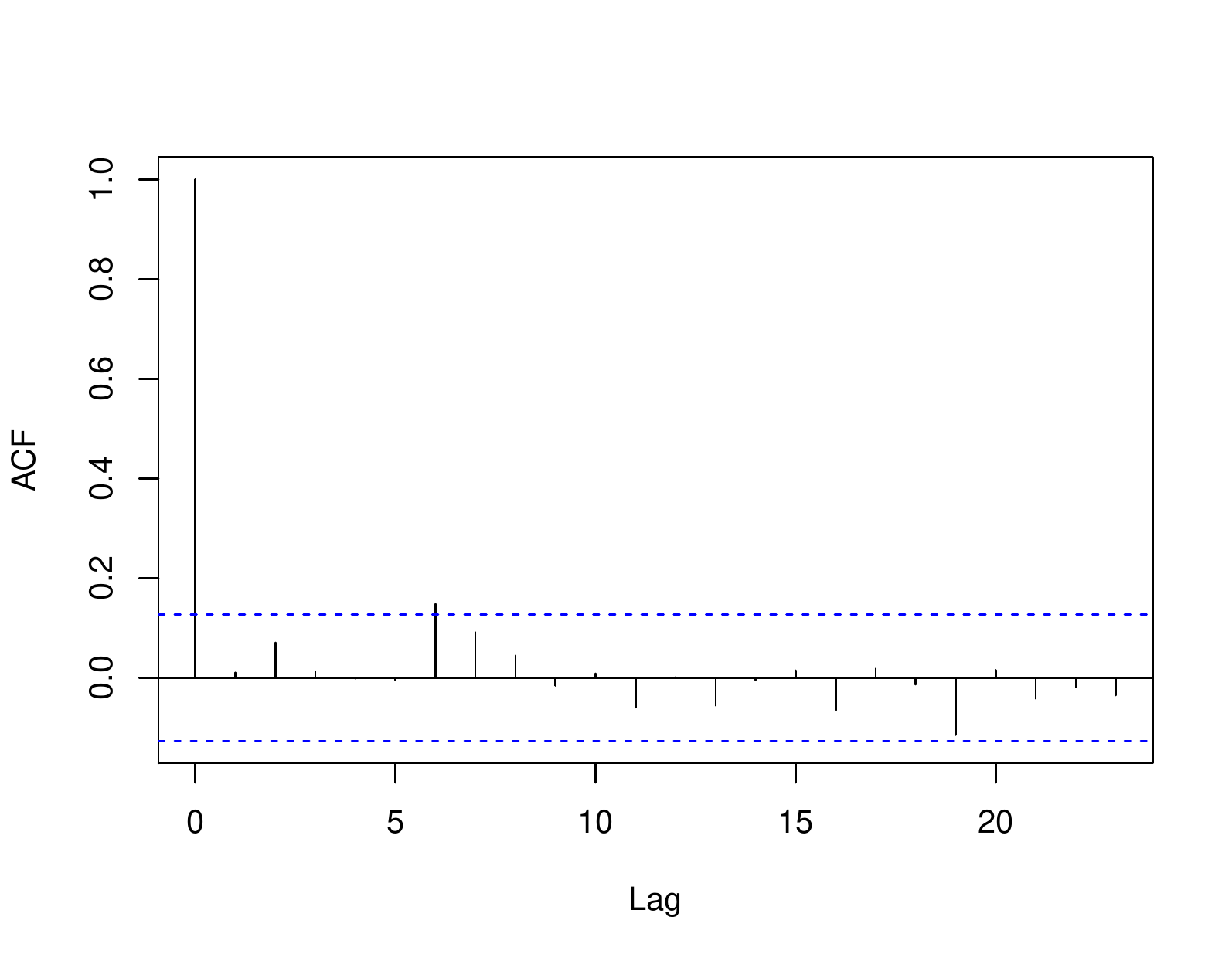}
	\end{center}
	\caption{Autocorrelation functions for the chip width measurements from machine 1 and 2.}
	\label{ACFplots}
\end{figure}

When calculating the classical first-stage sample size, one gets $ N_0 = \lfloor \Phi^{-1}(1-\alpha/2) / d \rfloor + 1 = 5152 $, because the standard deviation is very low but not taken into account by this formula. Instead, the proposed three-observations rule, however, leads to $ N_0 = 23 $, and using the first $ 23 $ observations from both machines leads to $ \wh{N}_{opt} = 145 $. The resulting confidence interval using the GD common mean estimator is given by $ [ 6.292313; 6.293313 ]$. 

\begin{figure}
	\begin{center}
		\includegraphics[width=10cm]{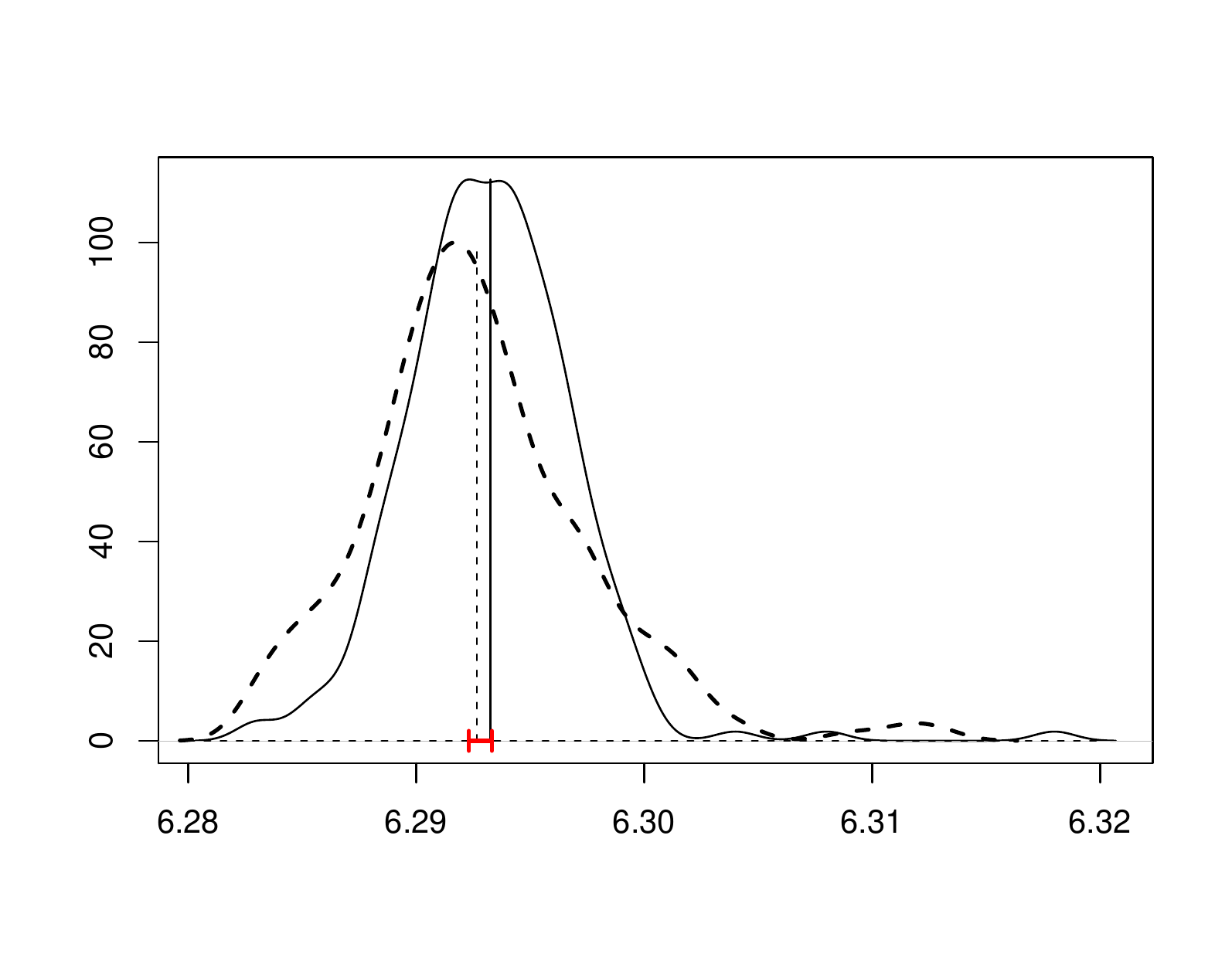}
	\end{center}
	\caption{Chip data: The fixed width confidence interval is marked in red. The vertical lines indicate the arithmetic means of both samples. Density estimates based on $245$ are added.}
	\label{ACFplots}
\end{figure}

\newpage

\section*{Acknowledgments}

A large part of the paper has been prepared during visits of A. Steland at Mejiro University, Tokyo. Both authors thank Nitis Mukhopadhyay for discussion, especially on two-stage procedures, at the International Symposium on Statistical Theory and Methodology for Large Complex Data 2018 held at Tsukuba University, and when he visited the Institute of Statistics at RWTH Aachen University. The authors gratefully acknowlegde the support of Takenori Takahashi, Mejiro University and Keio University Graduate School, and Akira Ogawa, Mejiro University, by providing the chip manufacturing data.



\end{document}